\documentclass[10pt,twoside]{article}


\usepackage{7OSME-style}


\theoremstyle{definition}

\newtheorem{prop}{Proposition}
\newtheorem{defn}{Definition}

\newtheorem{cor}{Corollary}[prop]
\newtheorem{rem}{Remark}



\title{Approximating a Target Surface with 1-DOF Rigid Origami}
\authorlist[He, Guest]{Zeyuan He, Simon D. Guest} 
\affiliations{Zeyuan He \at Department of Engineering, University of Cambridge, Trumpington Street, Cambridge CB2 1PZ, United Kingdom, \email{zh299@cam.ac.uk} \and Simon D. Guest \at Department of Engineering, University of Cambridge, Trumpington Street, Cambridge CB2 1PZ, United Kingdom, \email{sdg@eng.cam.ac.uk}}

\makeatletter
  \let\runtitle\@title
  \let\runauthor\shortauthor
\makeatother

\fancyhead[RE]{\footnotesize \textsf{\textsc{\runauthor}}}
\fancyhead[LO]{\footnotesize \textsf{\textsc{\runtitle}}}


\begin{document}

\maketitle

\begin{abstract}
 	We develop some design examples for approximating a target surface at the final rigidly folded state of a developable quadrilateral creased paper, which is folded with a 1-DOF rigid folding motion from the planar state. The final rigidly folded state is reached due to the clashing of panels. Now we can approximate some specific types of non-developable surfaces, but we do not yet fully understand how to approximate an arbitrary surface with a developable creased paper that has limited DOFs. Our designs might have applications in areas related to the formation of a shell structure from a planar region.
\end{abstract}

\section{Introduction}
We discuss here the inverse problem of rigid origami, that is, to approximate a target surface by rigid origami --- usually starting from a planar creased paper. This problem has been preliminarily discussed  in the review \cite{callens_flat_2017}. Generally, we need to consider the following factors when designing a creased paper for approximation.

\begin{enumerate}
	\item Which surface we can approximate.
	\item The DOF (degree of freedom) during the rigid folding motion.
	\item The utilization of materials.
\end{enumerate}
Experience shows that, it is hard to make the creased paper behave well in all aspects. Generally, as we reduce the possible DOFs during the rigid folding motion, less surfaces can be approximated. For example, \cite{demaine_origamizer:_2017} give a universal algorithm to fold a planar creased paper to any piecewise-polygon orientable 2-manifold, which can be used to approximate any orientable 2-manifold. Although to archive the ``watertight'' property, they add small additional features at the vertices and along the edges, this algorithm is practical and guarantees a minimum number of seams. This result is undoubtedly successful, but will have many DOFs during its rigid folding motion, and a large proportion of materials are used in the connections and ``walls'' hidden in the ``tuck'' side. On the other hand, \cite{dudte_programming_2016} and \cite{song_design_2017} use a flat-foldable creased paper. These algorithms guarantee one degree-of-freedom (1-DOF) during the rigid folding motion and high utilization of materials, but only possible for a cylinderical developable surface or a surface of revolution, otherwise the creased paper will not be rigid-foldable. If we triangulate this creased paper, we can approximate more surfaces but without being 1-DOF.

In this article we will focus on a branch of the inverse problem, to design a 1-DOF rigid origami approximating some desired shapes.  More formally, given a connected surface $S$ in $\mathbb{R}^3$, for any positive real number $\epsilon>0$, find a creased paper $(P,C)$, which is the union of a connected planar paper $P \subset \mathbb{R}^2$ and a straight-line crease pattern $C$ embedded on $P$, such that

\begin{enumerate} 
	\item $(P,C)$ is rigid-foldable to its final rigidly folded state $(P',C')$, where the rigid folding motion halts because some panels clash.
	\item $(P,C)$ has one degree of freedom during the rigid folding motion.
	\item the Hausdorff distance $d$ between $S$ and $P$ satisfies $d \le \epsilon$. 
\end{enumerate}
More details of the terminologies used here are given in \cite{he_rigid_2019}. 

By using a family of developable quadrilateral creased papers that are rigid-foldable but not necessarily flat-foldable, we are able to give solutions for approximating some surfaces at the final rigidly folded state, but they cannot be completely arbitrary.

Furthermore, the approximation problem naturally induces an optimization problem, that is, find the ``best'' creased paper that fits the extra presupposed requirements. We will discuss it at the end of the article.

\section{Choosing Design Example}

To our best knowledge, the constraints on 1-DOF rigid-foldability for a creased paper restrict the configurations it may have. Therefore it seems hard to approximate an arbitrary connected surface with 1-DOF rigid origami. Our idea is, from several types of 1-DOF developable and rigid-foldable quadrilateral creased papers we have known \cite{he_new_2018, he_rigid_2020}, we choose some of them as the \textit{design examples}, and study the rigid folding motions of them. For each design example, the profile of its inner vertices can approximate certain types of surfaces. The more design examples we can use, the more surfaces we can approximate.

In this section we will analyze two design examples mentioned in \cite{he_rigid_2020}. The first one is the developable case of the "parallel repeating" type, which is generated among rows of parallel inner creases (Figure \ref{fig: unit columns}); and the second one is the developable case of the "orthodiagonal" type, which is generated among several parallel straight line segments (Figure \ref{fig: orthodiagonal creased paper}).

\begin{figure}[!tb]
	\noindent \begin{centering}
		\includegraphics[width=1\linewidth]{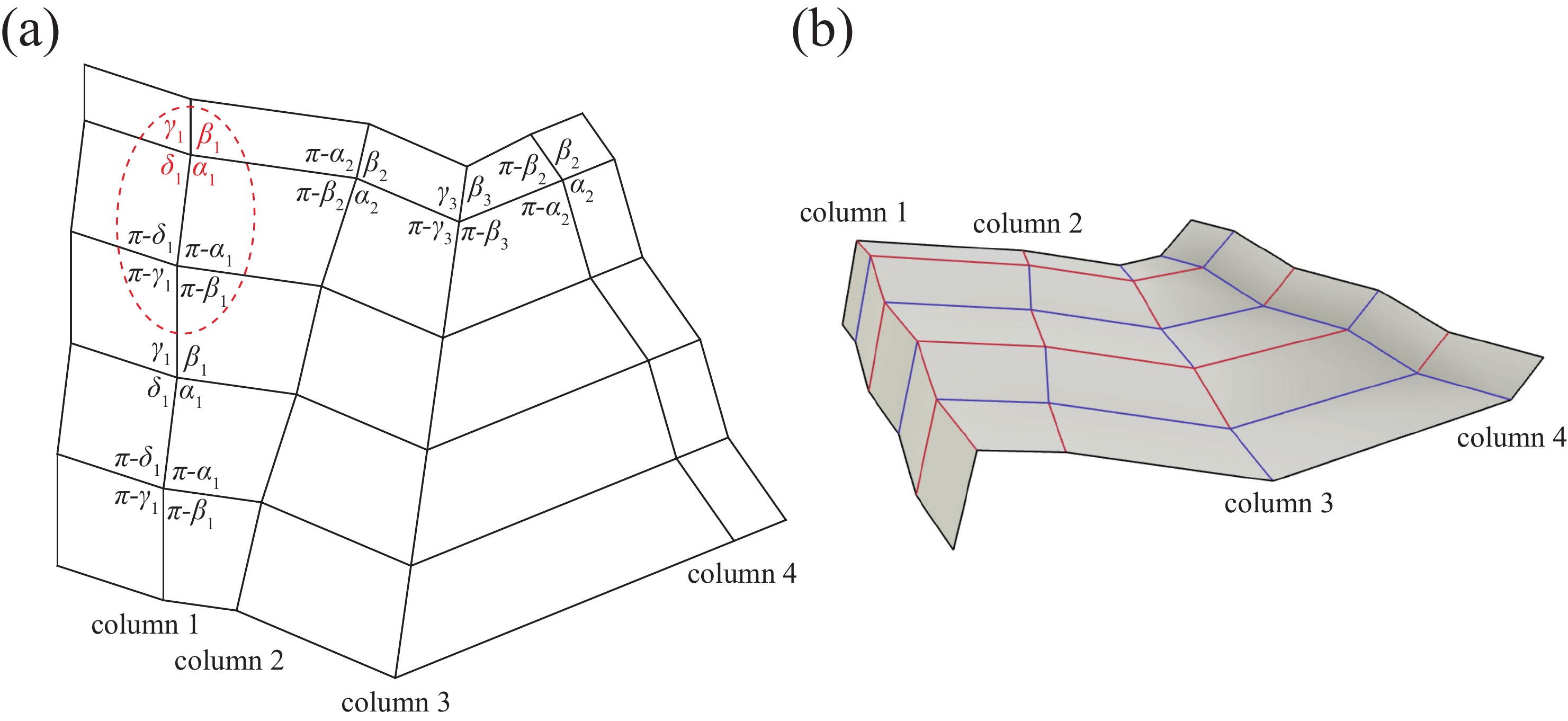}
		\par\end{centering}
	
	\caption{\label{fig: unit columns}(a) is the developable case of the parallel repeating type. In each row there are three independent sector angles $\alpha_i, \beta_i, \gamma_i$, and $\delta_i=2\pi-\alpha_i-\beta_i-\gamma_i$. Column 1 is an example of general input sector angles, where a "basic unit" of this column is labelled by a dashed red cycle; columns 2, 3 and 4 are flat-foldable and ``straight-line'' input sector angles. (b) is a rigidly folded state of (a) plotted by Freeform Origami \cite{tachi_freeform_2010-1}. The mountain and valley creases are colored red and blue. The inner vertices of a column are co-planar (see Proposition \ref{prop co-planar}).}
\end{figure}

\subsection{Parallel Repeating Type}

Figure \ref{fig: unit columns} shows the developable case of the parallel repeating type of rigid-foldable quadrilateral creased papers, which is generated among rows of parallel inner creases. In each column we can choose independent input sector angles $\alpha_i, \beta_i, \gamma_i$, and $\delta_i=2\pi-\alpha_i-\beta_i-\gamma_i$, then construct the rest of the creased paper with both these angles and the supplement of these angles. $\alpha_i, \beta_i, \gamma_i,\delta_i$ should not form a cross. Here the length of creases does not affect the rigid-foldability, but will affect the profile of inner vertices. Based on that we start to analyze which surface the parallel repeating type can approximate.

\begin{prop} \label{prop co-planar}
	The inner vertices on a column of the parallel repeating type are co-planar.
\end{prop}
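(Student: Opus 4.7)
The plan is to reduce the coplanarity of the column vertices to the statement that all vertical-crease vectors along the column lie in a common two-dimensional linear subspace of $\mathbb{R}^3$. Label the inner vertices on the chosen column as $v_1,v_2,\dots,v_m$ and let $c_i=v_{i+1}-v_i$ be the vertical-crease vectors. The column vertices are coplanar precisely when $\mathrm{span}(c_1,\dots,c_{m-1})$ has dimension at most two. Setting $V:=\mathrm{span}(c_1,c_2)$ (or $V:=\mathrm{span}(c_1)$ if $c_1,c_2$ are collinear, in which case the claim is trivial), it suffices to show $c_i\in V$ for every $i\ge 3$ by induction on $i$.

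For the inductive step I would analyse the rigid-folding kinematics at the vertex $v_{i+1}$, where the two panels carrying $c_i$ and $c_{i+1}$ meet along the horizontal crease in row $i+1$. In the parallel repeating type the sector angles at $v_{i+1}$ and its row-$(i+1)$ neighbours are built from $\alpha_{i+1},\beta_{i+1},\gamma_{i+1},\delta_{i+1}$ and their supplements, which pins the local 1-DOF motion across row $i+1$ to a rotation about the row-$(i+1)$ horizontal-crease axis. The crucial observation is that this axis lies inside $V$; once that is established, the rotation preserves $V$ and gives $c_{i+1}\in V$.

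The main obstacle is verifying that the update axis really does lie in $V$. The cleanest route I envisage is to reinterpret the column strip between two adjacent columns of vertices as a developable surface, and show that the parallel-repeating construction forces all horizontal creases inside this strip to remain mutually parallel throughout the folding motion. If so, the strip is a prismatic developable surface whose rulings run in that common direction, and the column is a planar cross-section of it, giving the claim at once. The heart of the proof is therefore to establish the parallelism of the horizontal creases inside each strip, which should follow from the supplementary-angle relationships of the parallel-repeating construction together with the dihedral-angle consistency around a developable vertex developed in \cite{he_rigid_2020}. If the prismatic interpretation does not carry through directly, the fallback is to invoke the explicit 1-DOF parameterization of that paper at $v_{i+1}$ and check the axis-invariance of $V$ in coordinates adapted to the row's horizontal-crease direction.
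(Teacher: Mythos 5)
There is a genuine gap in both routes you sketch. In the first route, the ``crucial observation'' that the row-$(i+1)$ horizontal-crease axis lies inside $V=\mathrm{span}(c_1,c_2)$ is false: at a folded degree-4 vertex the four incident creases are not coplanar, and indeed the content of the proposition (see Figure~\ref{fig: degree-4 single-vertex creased paper}(c)) is precisely that the horizontal creases stick \emph{out} of the plane of the column. Even if the axis did lie in $V$, a rotation about an axis contained in a plane does not map that plane to itself unless the angle is $0$ or $\pi$, so ``the rotation preserves $V$'' would not follow. In the second route, the parallelism you want to establish is true and essentially free --- each panel is a rigid trapezoid whose two row-edges are parallel in the crease pattern, and adjacent panels in a vertical strip share a row-edge, so all horizontal creases of a strip stay mutually parallel in every rigidly folded state and the strip is prismatic with some ruling direction $d$. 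But ``the column is a planar cross-section of it'' does not follow: a polyline on a prism that meets each ruling once need not be planar (take rulings in the $z$-direction over the square cross-section $(0,0),(1,0),(1,1),(0,1)$ and vertex heights $0,0,0,1$; the four points are non-coplanar yet each connecting segment lies in a face of the prism). So the prismatic structure alone cannot close the induction.

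What is missing is the fact the paper actually uses, which concerns the \emph{column} creases rather than the horizontal ones: because the parallel repeating construction alternates each row's sector angles with their supplements, the folded state repeats (up to translation) every two rows, so alternate column creases satisfy $A_1A_2 \parallel A_3A_4$, $A_2A_3 \parallel A_4A_5$, and so on, in every rigidly folded state. Two distinct parallel lines determine a plane, so $A_1,A_2,A_3,A_4$ are coplanar; that plane contains $A_4$ and the direction of $A_2A_3$, hence contains the line $A_4A_5$, and the chain of parallelisms propagates one fixed plane down the whole column. Your reduction to showing $c_i\in V$ for all $i$ is fine, but the inductive step has to come from this two-row periodicity of the repeating type, not from the parallelism of the rulings.
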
 

\begin{proof}
	Figure \ref{fig: degree-4 single-vertex creased paper}(a) shows a general column. We know $A_1A_2 \parallel A_3A_4$ at any rigidly folded state. Thus $A_1, A_2, A_3, A_4$ are coplanar. Because $A_iA_{i+1} \parallel A_{i+2}A_{i+3}$, this argument continues down the column, which means all the inner vertices are co-planar, and the angle between any two adjacent inner creases is $\xi$, as illustrated in Figure \ref{fig: degree-4 single-vertex creased paper}(b). Figure \ref{fig: degree-4 single-vertex creased paper}(c) demonstrates a rigidly folded state of this column.
\end{proof}

\begin{figure}[!tb]
	\noindent \begin{centering}
		\includegraphics[width=1\linewidth]{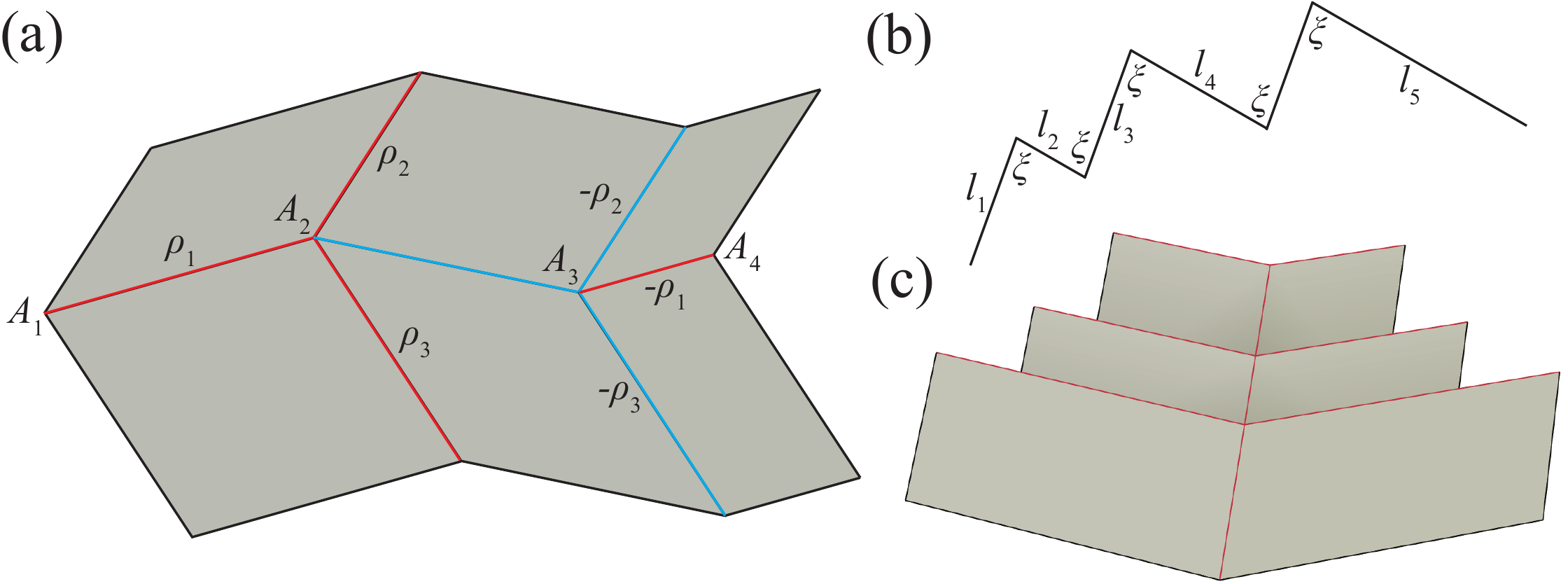}
		\par\end{centering}
	
	\caption{\label{fig: degree-4 single-vertex creased paper}(a) A ``basic unit'' of a column of the parallel repeating type. The mountain and valley creases are colored red and blue. Note that the folding angles $\rho_1, \rho_2, \rho_3$ on corresponding inner creases are opposite. (b) The inner creases of a column with its independent parameters labelled. (c) A view of a column from a view point in the plane coincident with the inner vertices, plotted by Freeform Origami \cite{tachi_freeform_2010-1}.}
\end{figure}

To study the profile of inner vertices on a column of the parallel repeating type, we use

\begin{defn}
	A $x$-$y$ coordinate system is built on the plane mentioned in Proposition \ref{prop co-planar}. We say the inner vertices $(x_i,y_i)$ of a column can \textit{approximate} a given planar curve $f: I \rightarrow \mathbb{R}^2$ if $\forall \epsilon>0$, there exists a column whose inner vertices $(x_i,y_i)$ make the Hausdorff distance $d$ between the set $(x_i,y_i)$ and the curve $f$ satisfy $d \le \epsilon$. 
\end{defn}

\begin{prop} \label{prop: approximation}
	The inner vertices on a column of the parallel repeating type can only approximate a planar curve $f:I \ni t \rightarrow (x(t),y(t)) \in \mathbb{R}^2$ that satisfies the following condition: there exists a rotation $\theta \in [0, 2\pi)$ and a shear transformation of magnitude $\pi/2-\xi$, $\xi \in [0, \pi]$, s.t.\ after the affine transformation $f \rightarrow \overline{f}$ described below, $\overline{f}$ is monotone decreasing.
		\begin{equation} \label{eq: affine}
		\left[ \begin{array}{c}
		\overline{x}(t)\\
		\overline{y}(t)
		\end{array} \right]=
		\left[ \begin{array}{cc}
		1 & -1 / \tan \xi \\
		0 & 1 / \sin \xi
		\end{array} \right]
		\left[ \begin{array}{cc}
		\cos \theta & \sin \theta \\
		-\sin \theta & \cos \theta
		\end{array} \right]
		\left[ \begin{array}{c}
		x(t)\\
		y(t)
		\end{array} \right]
		\end{equation}
We name the curve approximated by the inner vertices of a column as a \textit{target curve}.
\end{prop}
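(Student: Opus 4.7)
The plan is to reduce the statement to a monotonicity property of a zigzag polyline in a suitable coordinate frame. By Proposition \ref{prop co-planar}, the inner vertices $A_1,A_2,\ldots$ of a column are coplanar, and any two adjacent inner creases meet at the fixed angle $\xi$; therefore, in the $x$-$y$ coordinate system set up just before this proposition, the polyline $A_1A_2\cdots$ consists of edges that alternate between two fixed direction vectors, each edge of strictly positive length (creases cannot have zero length). Write $\vec{e}_1$ and $\vec{e}_2$ for unit vectors along these two directions, with $\vec{e}_1$ making angle $\theta$ with the $x$-axis.

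Next I would verify that the affine map in (\ref{eq: affine}) sends $\vec{e}_1$ and $\vec{e}_2$ onto perpendicular coordinate directions. Its right factor is the rotation by $-\theta$, which brings $\vec{e}_1$ onto $(1,0)$ and $\vec{e}_2$ onto a unit vector at angular distance $\xi$ from $(1,0)$. A short matrix computation then shows that the left factor $\bigl(\begin{smallmatrix} 1 & -\cot\xi \\ 0 & 1/\sin\xi \end{smallmatrix}\bigr)$, a shear of magnitude $\pi/2-\xi$, fixes $(1,0)$ while carrying the image of $\vec{e}_2$ onto the $\overline{y}$-axis. Thus under the full map (\ref{eq: affine}) the two alternating crease directions become axis-aligned. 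Because every edge of the polyline has positive length, consecutive transformed vertices $(\overline{x}_i,\overline{y}_i)$ step alternately along one axis and then the other; reading off the global orientation of the column pins down the signs, and one coordinate is monotone non-decreasing while the other is monotone non-increasing. Hence the transformed inner vertices trace a monotone-decreasing staircase.

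To conclude, the affine map (\ref{eq: affine}) is bijective and bi-Lipschitz, so Hausdorff approximation of $f$ by the inner vertices $\{(x_i,y_i)\}$ is equivalent, up to a constant, to Hausdorff approximation of $\overline{f}$ by the transformed vertices $\{(\overline{x}_i,\overline{y}_i)\}$; a Hausdorff limit of monotone-decreasing staircases is itself monotone decreasing, forcing $\overline{f}$ to be monotone decreasing. The main obstacle I anticipate is exactly the sign bookkeeping in the middle paragraph: the assertion that the two transformed crease directions come out with \emph{opposite} signs rather than equal signs — so that the statement's word is ``decreasing'' rather than ``increasing'' — has to be read off carefully from the parallel-repeating geometry in Figure \ref{fig: degree-4 single-vertex creased paper} and the opposite folding angles noted in its caption, rather than from the matrix computation alone.
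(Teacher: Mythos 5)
Your necessity argument is correct and is, in fact, a more careful version of the paper's own: where the paper simply asserts that ``the approximation turns left and right alternatively, so $\overline{f}$ must be monotonous,'' you actually verify that the shear--rotation in \eqref{eq: affine} sends the two alternating crease directions to the coordinate axes (the computation $(1,0)\mapsto(1,0)$ and $(\cos\xi,\sin\xi)\mapsto(0,1)$ under the left factor is right), and you correctly isolate the sign bookkeeping --- interior angle $\xi$ versus $\pi-\xi$ --- as the reason the conclusion is ``decreasing'' rather than merely ``monotone.'' The passage to the limit (a Hausdorff limit of monotone-decreasing staircases is monotone decreasing, in the sense that any two points of the limit set are ordered oppositely in the two coordinates) is also sound.

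However, you have proved only half of what the paper proves. The paper's proof is explicitly two-directional: alongside ``Necessity'' it contains a ``Sufficiency'' part in which, given any $f$ whose image $\overline{f}$ under \eqref{eq: affine} is monotone decreasing, a column is actually constructed whose inner vertices approximate $f$ to within any prescribed Hausdorff distance --- essentially by taking the axis-aligned staircase associated with a partition in the Darboux-sum sense and refining it, with monotonicity (hence Darboux integrability and only countably many jump discontinuities) guaranteeing that refinement drives the Hausdorff distance to zero, and then mapping back through the inverse of \eqref{eq: affine}. Although the literal wording ``can only approximate'' could be read as a pure necessity claim, the constructive direction is the content used downstream (step 2 of Proposition \ref{prop: surface} and step 2 of Proposition \ref{prop: approximation 2} both instruct one to approximate a chosen target curve ``as described in Proposition \ref{prop: approximation}''), so its omission is a genuine gap relative to the paper's proof: you would need to add the converse construction of the column (partition $\overline{f}$, place inner vertices at the staircase corners, read off the crease lengths, and transform back).
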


\begin{proof}
Sufficiency: An example of approximation is shown in Figure \ref{fig: target curve}. For a given curve $f$, after a rotation by $\theta$ and a shear transformation by $\pi/2-\xi$, as in equation \eqref{eq: affine}, $f$ (black curve in (b)) is mapped to $\overline{f}$ (black curve in (c)), where the inner creases are parallel to the $\overline{x}$ and $\overline{y}$ axes. Then if $\overline{f}$ is monotonous, we can construct an approximation corresponding to the partition when we apply the Darboux sum to describe the Darboux-integrability (red line segments in (c)). Because $\overline{f}$ is monotonous, it is Darboux-integrable (if $\overline{f}(I)$ is unbounded, the limit of difference between lower and upper Darboux sum is zero when the partition is infinitesimally refined) and only has countable first-kind discontinuity points, so arbitrarily refining the partition will make the Hausdorff distance be arbitrarily small. Hence we can approximate $f$ by re-transforming the approximation in the $\overline{x}$-$\overline{y}$ coordinate system to the $x$-$y$ coordinate system (red line segments in (b)). Here the requirement of monotone decreasing makes the angle between adjacent inner creases $\xi$, not $\pi-\xi$.

Necessity: If a curve $f$ can be approximated by the inner vertices of a column, we can always find corresponding $\theta$ and $\xi$ and do the affine transformation in equation \eqref{eq: affine}. Because the approximation turns left and right alternatively, $\overline{f}$ must be monotonous. To make the angle between adjacent inner creases $\xi$, not $\pi-\xi$, $\overline{f}$ should be monotone decreasing. 
\end{proof}

\begin{figure}[!tb]
	\noindent \begin{centering}
		\includegraphics[width=1\linewidth]{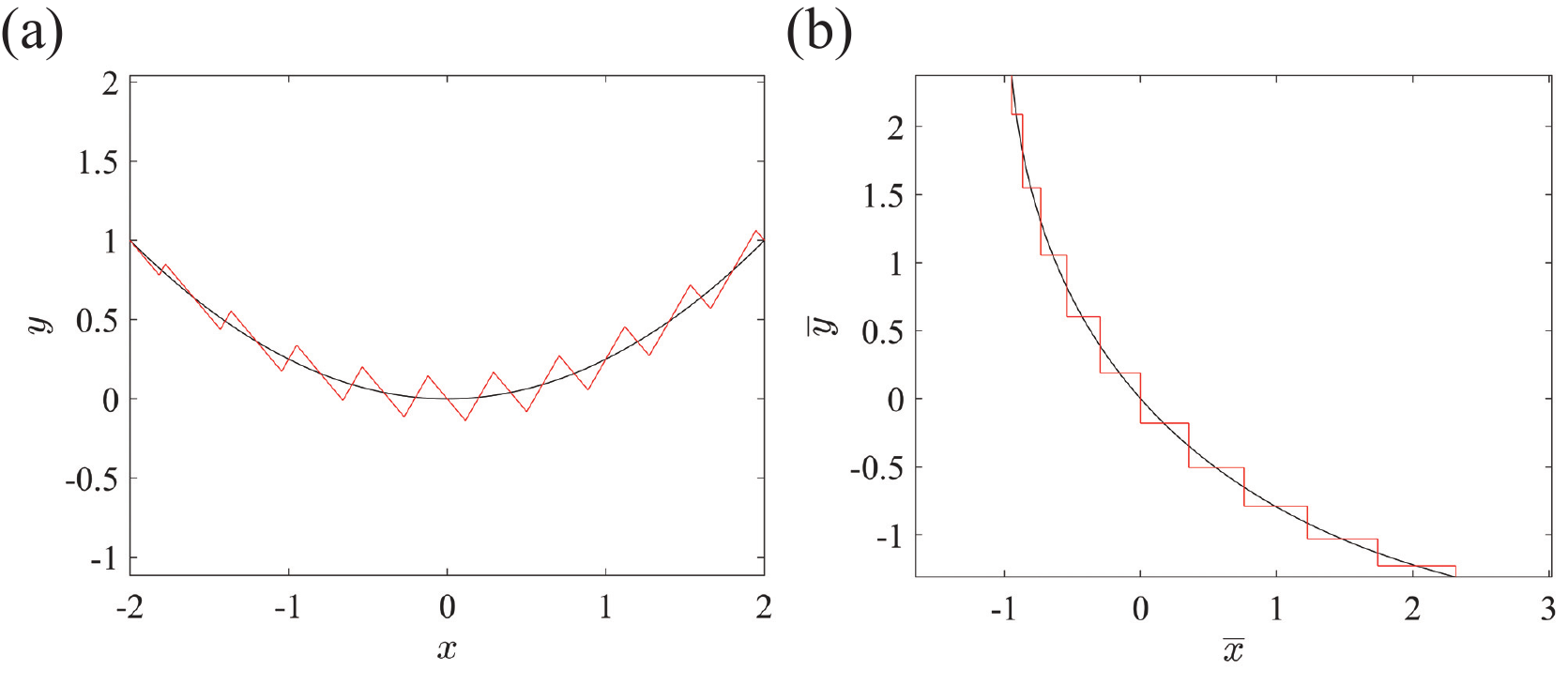}
		\par\end{centering}
	
	\caption{\label{fig: target curve}(a) shows a target curve (coloured black) and the result of an approximation (coloured red). Here $f(t)=[t,t^2/4]^T$, $t \in [-2,2]$. (b) is the image of (a) under equation \eqref{eq: affine} with $\theta=70^{\circ}$ and $\xi=60^{\circ}$. The image of the target curve and the approximation are coloured black and red respectively.}
\end{figure}

\begin{cor}
	The inner vertices of a column cannot approximate a closed planar curve.
\end{cor}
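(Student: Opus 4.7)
The plan is a short proof by contradiction that routes everything through Proposition~\ref{prop: approximation}. Suppose the inner vertices of some column approximate a closed planar curve $f : [a,b] \to \mathbb{R}^2$ with $f(a) = f(b)$. By Proposition~\ref{prop: approximation}, there then exist a rotation angle $\theta$ and a shear parameter $\xi$ such that the image $\overline{f}$ of $f$ under the affine transformation~\eqref{eq: affine} is monotone decreasing.

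First I would verify that the affine map~\eqref{eq: affine} is invertible: its matrix has determinant $1/\sin\xi$, which is nonzero for $\xi \in (0,\pi)$, and the endpoint values $\xi = 0, \pi$ correspond to degenerate collinear inner creases that can be excluded. Invertibility ensures that closedness is preserved, so $\overline{f}(a) = \overline{f}(b)$. Next I would unpack what "monotone decreasing" means for a planar curve in the sense used in Proposition~\ref{prop: approximation}: writing $\overline{f}(t) = (\overline{x}(t), \overline{y}(t))$, it requires that $\overline{x}$ and $\overline{y}$ each be monotone in $t$ with opposite senses, so that the polyline approximation turns consistently in one direction (making the adjacent-crease angle $\xi$ rather than $\pi - \xi$). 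Combined with $\overline{x}(a) = \overline{x}(b)$ and $\overline{y}(a) = \overline{y}(b)$, monotonicity of each component forces both components, and hence $\overline{f}$, to be constant on $[a,b]$. Pulling back through the invertible affine map, $f$ itself is a single point, contradicting the assumption that $f$ is a non-degenerate closed curve.

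The only subtle point, and the one I would flag as the main obstacle, is pinning down the intended meaning of "monotone decreasing" for a planar curve in Proposition~\ref{prop: approximation} and ruling out the formally degenerate constant loop; once the corollary is read as asserting that no non-constant closed curve is approximable, the argument reduces cleanly to invertibility of~\eqref{eq: affine} combined with the elementary observation that a non-constant monotone real function cannot take the same value at both endpoints.
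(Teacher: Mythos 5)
Your proposal is correct and matches the paper's (implicit) reasoning: the corollary is stated without proof precisely because it follows immediately from the necessity direction of Proposition~\ref{prop: approximation}, exactly as you argue --- an invertible affine image of a closed curve is still closed, and a monotone curve with coincident endpoints must degenerate to a point. Your explicit treatment of the invertibility of~\eqref{eq: affine} and of the degenerate constant loop fills in details the paper leaves unstated, but the route is the same.
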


\begin{rem} \label{rem: comment}
	We can require $f$ to be continuous and $I$ to be a closed interval in $\mathbb{R}$ for the following reason. If $f(I)$ is not connected, $f(I)$ is the union of countable disjoint closed intervals. Each such subset can be approximated independently because the rigid folding motions of disconnected creased papers are independent. Therefore we can require $f(I)$ to be connected, which means $\overline{f}(I)$ is continuous and $f(I)$ is continuous. Then we can re-parametrize $f(I)=f(I')$, where $I'$ is a closed interval in $\mathbb{R}$. 
\end{rem}

Next we will analyze the rigid folding motion of a row of the parallel repeating type. 

\begin{prop} \label{prop datum curve}
	The inner vertices on a row of the parallel repeating type can approximate a Darboux-integrable curve $\Gamma: J \rightarrow \mathbb{R}^3$ at its final rigidly folded state, named the \textit{datum curve}. 
\end{prop}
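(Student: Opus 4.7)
The approach mirrors the strategy used for Proposition \ref{prop: approximation}, but now in $\mathbb{R}^3$ and using the clashing condition to fix the folding angles at the final state. First I would describe a single row as a polyline in $\mathbb{R}^3$: at the final rigidly folded state the folding angles are determined (up to the 1-DOF parameter) by the clashing of adjacent panels, so every inner crease direction on the row is fixed once the sector angles at each inner vertex and the clashing condition are prescribed. Let $V_i$ denote the inner vertices along the row, and let $\ell_i$ be the crease length between $V_i$ and $V_{i+1}$, with direction $\mathbf{u}_i \in S^2$ obtained from the folded-state geometry. Then $V_{i+1}=V_i+\ell_i\,\mathbf{u}_i$, so the set $\{V_i\}$ is a polyline whose step vectors we can design.

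The key step is to verify the available design freedom. I would use the local characterisation of the parallel repeating type (from \cite{he_rigid_2020}) to show that by independently adjusting the sector angles at each inner vertex on the row, the unit vector $\mathbf{u}_i$ can be prescribed within an open subset of $S^2$ that contains the directions needed to track any target direction, and that the final folding angles (hence $\mathbf{u}_i$) are well-defined at the clashing configuration. Then for each $i$, by picking $\ell_i$ as the Euclidean norm of the desired step $\Gamma(t_{i+1})-\Gamma(t_i)$ and the sector angles so that $\mathbf{u}_i$ aligns with the corresponding unit chord, the polyline vertex $V_{i+1}$ is placed exactly on $\Gamma(t_{i+1})$ (up to a global rigid motion fixing $V_0=\Gamma(t_0)$).

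Given a Darboux-integrable curve $\Gamma:J\to\mathbb{R}^3$, I would then run a Darboux-sum argument analogous to the planar case: take a partition $t_0<t_1<\dots<t_N$ of $J$, construct the row so that $V_i=\Gamma(t_i)$ via the choices above, and bound the Hausdorff distance between the polyline segments $[V_i,V_{i+1}]$ and the corresponding arcs $\Gamma([t_i,t_{i+1}])$ by the oscillation of $\Gamma$ on each subinterval. Darboux-integrability of $\Gamma$ ensures that refining the partition drives the total oscillation (and hence the Hausdorff distance between the polyline and $\Gamma(J)$) to zero, exactly as in the proof of Proposition \ref{prop: approximation}; the issue with unbounded or disconnected $\Gamma(J)$ can be handled by the same remark as in Remark \ref{rem: comment}.

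\textbf{Main obstacle.} The delicate point is the design-freedom claim: one must check that the sector angles at consecutive inner vertices on a row can be chosen independently without destroying 1-DOF rigid-foldability of the full creased paper, and that the clashing condition indeed singles out a final folded state where the direction $\mathbf{u}_i$ varies continuously and surjectively over the directions needed to track $\Gamma$. I would expect to invoke the compatibility/consistency results of \cite{he_rigid_2020} to guarantee that such sector angles exist, and a short continuity/open-mapping argument for the direction-prescription step, leaving the Darboux approximation itself as the easy part.
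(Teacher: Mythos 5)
Your overall strategy is the same as the paper's: partition $\Gamma$, realize the resulting polyline as the row of inner creases at the final rigidly folded state by choosing crease lengths and sector angles, and let Darboux-integrability drive the Hausdorff distance to zero under refinement. The paper, however, makes the step you flag as the ``main obstacle'' concrete, and that step is really the entire content of the proposition. Rather than prescribing free directions $\mathbf{u}_i \in S^2$, the paper encodes the polyline by its intrinsic data $l_i = \|A_iA_{i+1}\|$, the turning angles $\beta_i = \angle A_{i-1}A_iA_{i+1}$, and the dihedral angles $\theta_i$ between consecutive triangles $\triangle A_{i-1}A_iA_{i+1}$ and $\triangle A_iA_{i+1}A_{i+2}$ (which removes the global rigid motion you have to quotient out), and then writes explicit spherical-trigonometry equations, \eqref{eq: the first beta and theta} and \eqref{eq: general beta and theta}, expressing $\beta_i$, $\beta_{i+1}$, $\theta_i$ in terms of the sector angles $\alpha_{4i-3},\dots,\alpha_{4i+4}$. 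To make the count of unknowns work it does \emph{not} choose the sector angles freely at every vertex: all vertices in the row except the first are taken from the flat-foldable family (column 2 of Figure \ref{fig: unit columns}), giving the two extra constraints $\alpha_{4i+1}+\alpha_{4i+3}=\pi$, $\alpha_{4i+2}+\alpha_{4i+4}=\pi$, and the clash is engineered at the first column by setting $\rho_2=\pi$ there with $\rho_4$ as a free design parameter. Your proposal is silent on all of this.

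The concrete gap is therefore your ``design-freedom claim.'' You cannot simply pick sector angles independently at each inner vertex of the row: 1-DOF rigid-foldability of the parallel repeating type ties the whole row together through the transfer of folding angles, and the solvability of the resulting equations for the $\alpha$'s is genuinely in question --- the paper itself concedes in the Discussion that the algorithm ``does not guarantee a solution'' near singular points of \eqref{eq: the first beta and theta} and \eqref{eq: general beta and theta}, and that a branch of the folding motion and the signs in \eqref{eq: general beta and theta} must be chosen by hand. An ``open-mapping/continuity argument'' invoked abstractly will not substitute for exhibiting these equations and choosing the flat-foldable simplification that makes them determinate; without that, your proof has no mechanism for producing the creased paper whose folded row is the prescribed polyline. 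The Darboux part of your argument is fine and matches the paper, but it is the easy part, as you yourself note.
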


\begin{figure}[!tb]
	\noindent \begin{centering}
		\includegraphics[width=1\linewidth]{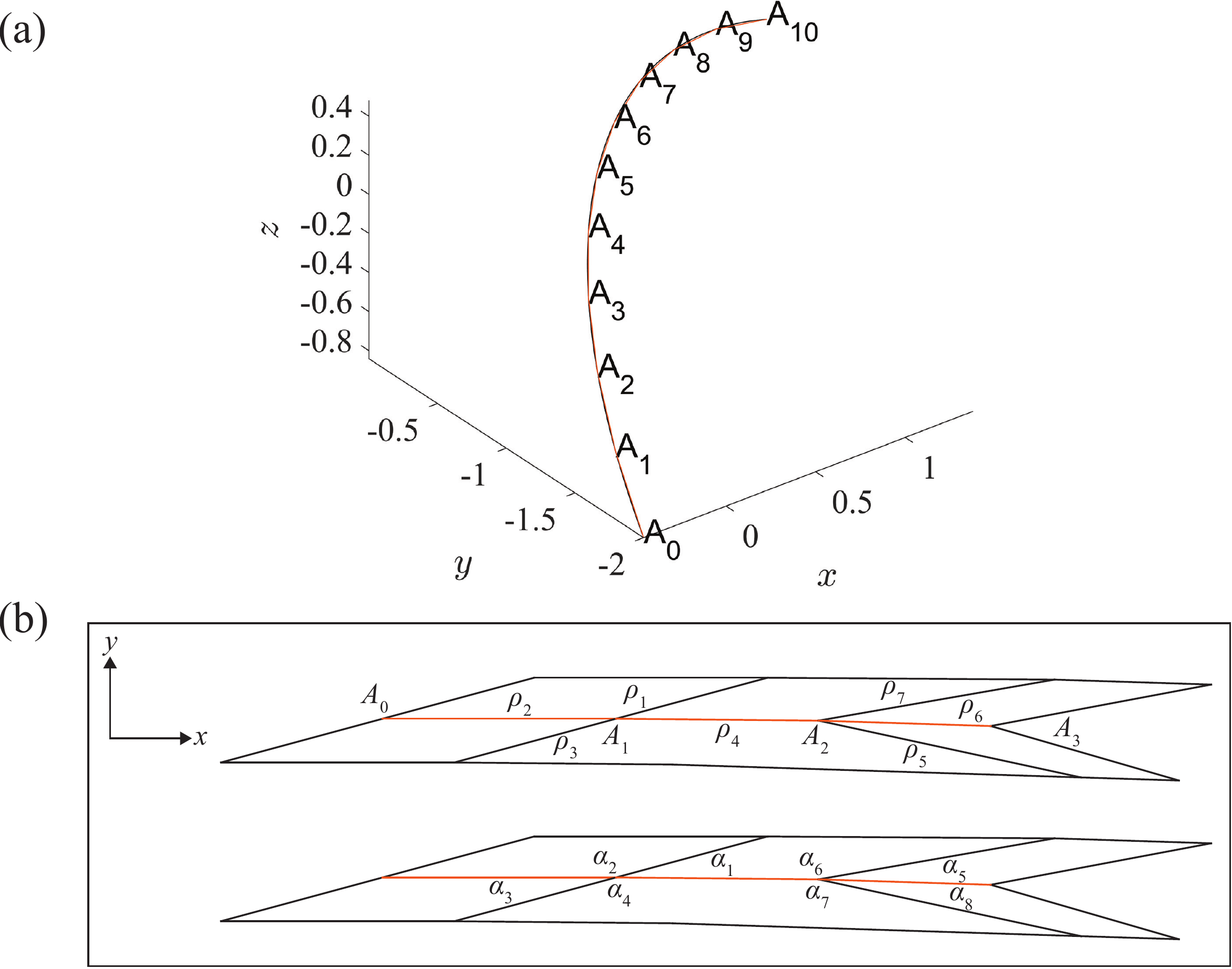}
		\par\end{centering}
	
	\caption{\label{fig: approximation}(a) An example of the datum curve $\Gamma$ (coloured black) with its approximation by line segments (coloured orange). In this example, $n=9$, $\Gamma(u)=[u+\cos u, -2u^2, \sin u]^T$, $u \in [-1,0.5]$, $\rho_4=5\pi/6$. Here we partition $\Gamma$ uniformly in $u$. (b) Part of the creased paper. The inner creases of its final rigidly folded state (coloured orange) exactly form the approximation in (a). We label all partition points $A_k$, $k \in [0,n+1]$; folding angles $\rho_k$, $k \in [1,3n+1]$; sector angles $\alpha_k$, $k \in [1,4n]$. The planar pattern is viewed along the direction perpendicular to the plane.}
\end{figure}

\begin{proof}
	An approximation can be generated following the steps mentioned below.
\begin{enumerate}
	\item Partition $\Gamma$ (black curve in Figure \ref{fig: approximation}(a)) by $n+2$ points and connect adjacent partition points in sequence by line segments (orange line segments in Figure \ref{fig: approximation}(a)). Any given Hausdorff distance $\epsilon$ can be satisfied by choosing a sufficiently large $n$. 
	\item Assign a direction of $\Gamma$, label each partition point $A_i$ ($i \in [0,n+1]$) along this direction. From the coordinates of $A_i$, calculate $l_i= \|A_iA_{i+1}\|$ ($i \in [0,n]$), $\beta_i=\angle A_{i-1}A_iA_{i+1}$ ($i \in [1,n]$) and $\theta_i=\langle \triangle A_{i-1}A_iA_{i+1}, \triangle A_iA_{i+1}A_{i+2} \rangle$ ($i \in [1,n-1]$). $\theta_i \in [0,2\pi)$ is calculated by the rotation angle along the vector $\overrightarrow{A_iA_{i+1}}$.
	\item Choose the first inner vertex on the left from column 3 in Figure \ref{fig: unit columns}, set $\rho_4$ as a specific angle and $\rho_2=\pi$ (see Figure \ref{fig: approximation}(b)), with a known $\beta_1$, solve the following equations:
	\begin{equation} \label{eq: the first beta and theta}
	\begin{gathered} 
	\rho_2=2\arccos\bigg(\dfrac{\cos \alpha_2 \cos \beta_1-\cos \alpha_1}{\sin \alpha_2 \sin \beta_1}\bigg) \\	
	\rho_4=2\arccos\bigg(\dfrac{\cos \alpha_1 \cos \beta_1-\cos \alpha_2}{\sin \alpha_1 \sin \beta_1}\bigg) 			
	\end{gathered}
	\end{equation}	
	we can obtain $\alpha_1$ and $\alpha_2$.
    \item Then we continue to obtain other sector angles ($i \in [1,n-1]$) in this row, as shown in Figure \ref{fig: approximation}(b). Regarding $\alpha_{4i-3}$, $\alpha_{4i-2}$, $\alpha_{4i-1}$, $\alpha_{4i}$, $\beta_i$, $\beta_{i+1}$ and $\theta_i$ as known variables, there are four equations for $\alpha_{4i+1}$, $\alpha_{4i+2}$, $\alpha_{4i+3}$ and $\alpha_{4i+4}$. Two of them are related to $\beta_i$, $\beta_{i+1}$ and $\theta_i$.
    \begin{multline} \label{eq: general beta and theta}
    \arccos\bigg(\dfrac{\cos \alpha_{4i-3} \cos \beta_i-\cos \alpha_{4i-2}}{\sin \alpha_{4i-3} \sin \beta_i}\bigg) \pm \arccos\bigg(\dfrac{\cos \alpha_{4i-1}-\cos \alpha_{4i} \cos \beta_i}{\sin \alpha_{4i} \sin \beta_i}\bigg) = \\
    \arccos\bigg(\dfrac{\cos \alpha_{4i+2} \cos \beta_{i+1}-\cos \alpha_{4i+1}}{\sin \alpha_{4i+2} \sin \beta_{i+1}}\bigg) \pm \arccos\bigg(\dfrac{\cos \alpha_{4i+4}-\cos \alpha_{4i+3} \cos \beta_{i+1}}{\sin \alpha_{4i+3} \sin \beta_{i+1}}\bigg) \\
    \shoveleft{\theta_i= \pm \arccos\bigg(\dfrac{\cos \alpha_{4i-2}-\cos \alpha_{4i-3} \cos \beta_i}{\sin \alpha_{4i-3} \sin \beta_i}\bigg)} \\ 
    \pm \arccos\bigg(\dfrac{\cos \alpha_{4i+1}-\cos \alpha_{4i+2} \cos \beta_{i+1}}{\sin \alpha_{4i+2} \sin \beta_{i+1}}\bigg)
    \end{multline}
    Note that the $\pm$ depends on the rigid folding motion we choose and the magnitude of sector angles. These equations can be directly derived from spherical trigonometry. Another equation is $\alpha_{4i+1}+\alpha_{4i+2}+\alpha_{4i+3}+\alpha_{4i+4}=2\pi$, but in order to simplify the steps we suppose the other vertices in this row are from column 2 in Figure \ref{fig: unit columns}. Hence there are two more equations:
	\begin{equation}
	\alpha_{4i+1}+\alpha_{4i+3}=\pi \quad \alpha_{4i+2}+\alpha_{4i+4}=\pi
	\end{equation}
    \item With $l_i$ and $\alpha_{4i-3}$, $\alpha_{4i-2}$, $\alpha_{4i-1}$, $\alpha_{4i}$ ($i \in [1,n]$), draw the creased paper.
\end{enumerate}
\end{proof}

\begin{rem} \label{rem: comment 2}
	Generically, We can require $\Gamma$ to be continuous and $J$ to be a closed interval in $\mathbb{R}$ for the following reason. Similar to our analysis in Remark \ref{rem: comment}, $\Gamma(J)$ can be required as a connected and closed set. If $\Gamma(J)$ has no second-kind discontinuity points, $\Gamma(J)$ is continuous. Then we can re-parametrize $\Gamma(J)=\Gamma(J')$, where $J'$ is a closed interval in $\mathbb{R}$. Besides, $\Gamma$ can be a closed curve.
\end{rem}

\begin{rem}
	Here the final rigidly folded state is not special, we make it final by designing the rigid folding motion to be halted by clashing of panels at the first column of inner vertices from the left. If we release this condition we can make the datum curve be approximated by an intermediate rigidly folded state.
\end{rem}

\begin{cor} \label{cor: s1-s1}
	In step 4 of Proposition \ref{prop datum curve}, If we choose other vertices in the first row from column 3 in Figure \ref{fig: unit columns} , equation \eqref{eq: general beta and theta} will be simplified:
	\begin{equation}
	\begin{gathered}
	\dfrac{\cos \alpha_{4i-3} \cos \beta_i-\cos \alpha_{4i-2}}{\sin \alpha_{4i-3} \sin \beta_i} 
	=\dfrac{\cos \alpha_{4i+2} \cos \beta_{i+1}-\cos \alpha_{4i+1}}{\sin \alpha_{4i+2} \sin \beta_{i+1}} \\
	\theta_1 = 0 ~~ \textrm{if} ~~ (\alpha_{4i-3}+\alpha_{4i-2}-\pi)(\alpha_{4i+1}+\alpha_{4i+2}-\pi)>0 \\
	~~~~ = \pi ~~ \textrm{if} ~~ (\alpha_{4i-3}+\alpha_{4i-2}-\pi)(\alpha_{4i+1}+\alpha_{4i+2}-\pi)<0
	\end{gathered}
	\end{equation}
	Now there is only one branch of rigid folding motion and only one equation for $\alpha_{4i+1}$ and $\alpha_{4i+2}$. However, it requires $\theta_1=0$ or $\pi$, which means $\Gamma$ should be locally planar in a discrete sense. More essentially, \cite{fuchs_more_1999} illustrate this in a continuous sense.
\end{cor}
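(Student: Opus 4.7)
The plan is to substitute the additional constraints of column~3 directly into equation~\eqref{eq: general beta and theta} and watch the four arccos terms collapse. Column~3 imposes, beyond the flat-foldability conditions of column~2, additional ``straight-line'' relations among the sector angles of each vertex; a short inspection of the angle relations shows these together force the identifications $\alpha_{4i-1}=\alpha_{4i-2}$ and $\alpha_{4i}=\alpha_{4i-3}$, with analogous identities at the neighbouring vertex.

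After this substitution, using $\cos(\pi-\alpha)=-\cos\alpha$ and $\sin(\pi-\alpha)=\sin\alpha$, the second arccos on each side of \eqref{eq: general beta and theta} equals $\arccos(-x)$, where $\arccos(x)$ is the first arccos on that side. The identity $\arccos(x)+\arccos(-x)=\pi$ then collapses each side to either $\pi$ (with the $+$ choice in the $\pm$) or $2\arccos(x)-\pi$ (with the $-$ choice). The $(+,+)$ branch yields the trivial tautology $\pi=\pi$; the $(-,-)$ branch yields the single informative equation $x_L=x_R$, which is exactly the simplified relation stated in the corollary. The mixed branches collapse to degenerate conditions $x_L=-1$ or $x_R=-1$ that are non-generic and may be discarded. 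For the $\theta_i$ equation the same substitution makes the two arccos terms both equal to $\pi-\arccos(x)$, so any $\pm\pm$ combination of them can only produce $0$ or $\pi$ modulo $2\pi$.

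The main obstacle is bookkeeping the $\pm$ signs so as to determine whether $\theta_i=0$ or $\theta_i=\pi$. I would identify the $\pm$ in front of the arccos contribution from vertex $j$ with the branch of the single-vertex kinematics being traversed at that vertex, and argue that for a flat-foldable straight-line degree-4 vertex this branch is governed precisely by the sign of $\alpha_{4j-3}+\alpha_{4j-2}-\pi$, which discriminates the ``acute'' from the ``obtuse'' configuration of the collinear-crease vertex. When the two neighbouring vertices lie on the same side of this threshold the two signs agree and cancel, giving $\theta_i=0$; when they lie on opposite sides the signs disagree and add, giving $\theta_i=\pi$. This matches the stated dichotomy on $(\alpha_{4i-3}+\alpha_{4i-2}-\pi)(\alpha_{4i+1}+\alpha_{4i+2}-\pi)$, and its geometric reading --- that consecutive partition triangles of $\Gamma$ share a common plane, so the datum curve is locally planar in the discrete sense, mirroring the continuous result of \cite{fuchs_more_1999} --- is then immediate.
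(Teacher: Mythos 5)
The paper states this corollary without proof, so your argument has to stand on its own, and it contains a genuine inconsistency. Your identification of the column-3 relations as $\alpha_{4i-1}=\alpha_{4i-2}$ and $\alpha_{4i}=\alpha_{4i-3}$ cannot be the intended one: since the four sector angles at each developable vertex sum to $2\pi$, those identities force $\alpha_{4i-3}+\alpha_{4i-2}=\pi$ identically, so the discriminant $(\alpha_{4i-3}+\alpha_{4i-2}-\pi)(\alpha_{4i+1}+\alpha_{4i+2}-\pi)$ appearing in the corollary --- and on which your entire $\theta_i$ case analysis rests --- would vanish for every admissible vertex. The relations consistent with a generically nonzero discriminant (flat-foldability together with collinearity of the \emph{column} creases) are instead $\alpha_{4i-2}=\alpha_{4i-3}$ and $\alpha_{4i-1}=\alpha_{4i}=\pi-\alpha_{4i-3}$. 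Fortunately this substitution still makes the two arccos arguments on each side of \eqref{eq: general beta and theta} coincide (both reduce to $-\cot\alpha_{4i-3}\tan(\beta_i/2)$), so your collapse of the first equation to the stated relation survives once the substitution is corrected; only now each side collapses to $2\arccos(z)$ or $0$ rather than to $2\arccos(z)-\pi$ or $\pi$.

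The second gap is in the $\theta_i$ step. If both arccos terms equal $\pi-\arccos(x)$, the four sign choices give $0$ and $\pm 2\bigl(\pi-\arccos(x)\bigr)$; the latter is generically neither $0$ nor $\pi$, so your claim that ``any $\pm\pm$ combination can only produce $0$ or $\pi$ modulo $2\pi$'' is false. To obtain $\theta_i=\pi$ you need the two arccos arguments to be \emph{negatives} of one another, so that the terms sum to $\pi$ via $\arccos(w)+\arccos(-w)=\pi$; but that happens only if, in the opposite-chirality case, the compatible branch of the first equation is $z_i=-z_{i+1}$ rather than $z_i=z_{i+1}$, with this sign flip occurring in tandem with the $(+,+)$ choice in the torsion equation. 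Your proposal never establishes this linkage between the branch of the folding-angle compatibility equation and the branch of the $\theta_i$ equation, yet it is exactly this linkage --- governed by whether the collinear column creases at $A_i$ and $A_{i+1}$ are mountain or valley, i.e.\ by the signs of $2\alpha_{4i-3}-\pi$ and $2\alpha_{4i+1}-\pi$ --- that produces the stated dichotomy. As written, your argument at best delivers the $\theta_i=0$ half of the conclusion.
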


From Propositions \ref{prop co-planar}, \ref{prop: approximation} and \ref{prop datum curve}, we are now in a position of studying which surface the parallel repeating type can approximate.

\begin{figure}[!tb]
	\noindent \begin{centering}
		\includegraphics[width=0.98\linewidth]{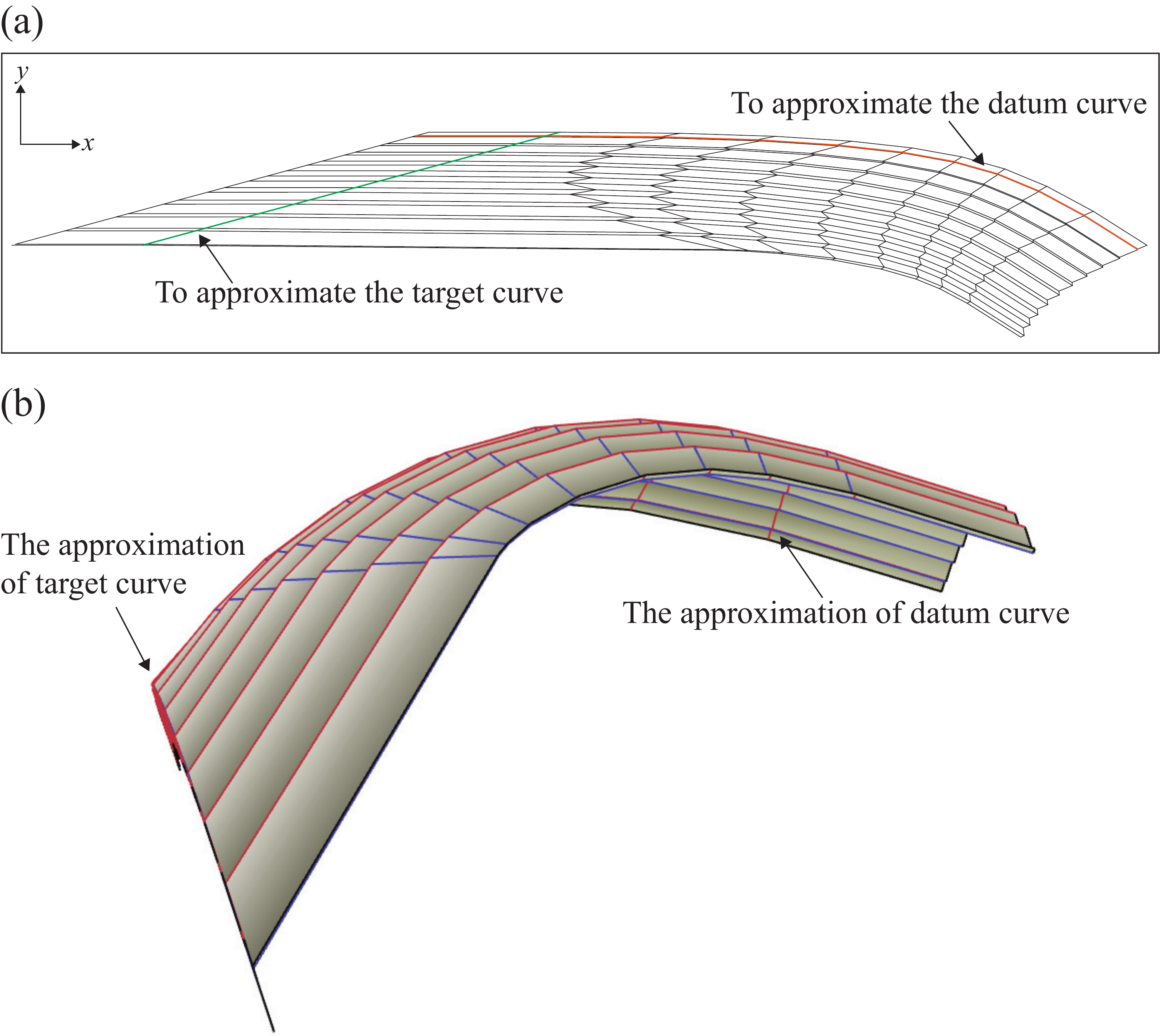}
		\par\end{centering}
	
	\caption{\label{fig: approximation-2}(a) is the creased paper approximating the target surface described in Proposition \ref{prop: surface}. The equation of the datum curve is given in the caption of Figure \ref{fig: approximation}; $f_1(t)=[t,\exp(t)]$, $t \in [0,1]$. We choose $\theta=73^\circ$. The orange and green inner creases are to approximate the datum curve $\Gamma$ and the target curve $f_1$. The first column of inner vertices from the left approximates the datum curve is from column 3 in Figure \ref{fig: unit columns}, and the others are from column 2 in Figure \ref{fig: unit columns}. The planar creased paper is viewed along the direction perpendicular to the plane. (b) is the final rigidly folded state of (a), where we approximate these two curves, plotted by Freeform Origami \cite{tachi_freeform_2010-1}. The rigid folding motion halts due to a clash at the column we approximate the target curve. The mountain and valley creases are colored red and blue.}
\end{figure}

\begin{prop} \label{prop: surface}
	The parallel repeating type can approximate a surface $S: V \ni (u,t) \rightarrow \vec{r}(u,t) \in \mathbb{R}^3$ under a given Hausdorff distance $\epsilon$. The creased paper is generated by the following steps, and $S$ is described below. (see Figure \ref{fig: approximation-2})
	\begin{enumerate}
		\item Choose a datum curve $\Gamma: u \in J \rightarrow \mathbb{R}^3$ and approximate it in a row under a sufficiently small Hausdorff distance $\epsilon_1<\epsilon$, as described in Proposition \ref{prop datum curve}.
		\item Choose a target curve $f_1: t \in I \rightarrow \mathbb{R}^2$ and approximate it the first column from the left under a sufficiently small Hausdorff distance $\epsilon_2<\epsilon$, as described in Proposition \ref{prop: approximation}.
		\item In other columns, the shape of the target curve $f_{i+1} : I \ni t \rightarrow (x_{i+1}(t),y_{i+1}(t))$ ($i \in [1,n-1]$) is an affine transformation of $f_1 : I \ni t \rightarrow (x_1(t),y_1(t))$.
		\begin{equation} \label{eq: affine 2}
		\left[ \begin{array}{c}
		x_{i+1}(t)\\
		y_{i+1}(t)
		\end{array} \right]= A^{-1}(\xi_{i+1},\theta)
		\prod_{j=1}^{i} \left[ \begin{array}{cc}
		k_1 & 0 \\
		0 & k_2
		\end{array} \right] A(\xi_1,\theta)
		\left[ \begin{array}{c}
		x_1(t)\\
		y_1(t)
		\end{array} \right] \\
		\end{equation}
		where,
		\begin{equation*}
		k_1= \frac{\sin \alpha_{4j-3}}{\sin \alpha_{4j+2}}, \quad
		k_2= \frac{\sin \alpha_{4j}}{\sin \alpha_{4j+3}}
		\end{equation*}
		or vice versa. This depends on whether the line segments start in the $\overline{x}$ or $\overline{y}$ direction as shown in Figure \ref{fig: target curve}(b). Additionally,
		\begin{equation*}
		\begin{gathered}
		A(\xi,\theta)=
		\left[ \begin{array}{cc}
		1 & -1 / \tan \xi \\
		0 & 1 / \sin \xi
		\end{array} \right]
		\left[ \begin{array}{cc}
		\cos \theta & \sin \theta \\
		-\sin \theta & \cos \theta
		\end{array} \right] \\
		\cos \xi_{i+1}=\cos \alpha_{4i+2}\cos \alpha_{4i+3}+\dfrac{\sin \alpha_{4i+2}\sin \alpha_{4i+3}}{\sin \alpha_{4i-3}\sin \alpha_{4i}}(\cos \xi_i - \cos \alpha_{4i-3}\cos \alpha_{4i}) \\
		\xi_1=|2\alpha_2-\pi|
		\end{gathered}
		\end{equation*}
		$A$ is the affine matrix we used in Proposition \ref{prop: approximation}, and $\theta$ is the rotation angle in the approximation of $f_1$.
		We then define the angle between the planes where $f_i$ and $f_{i+1}$ locate by $\phi_i$ ($i \in [1,n-1]$), which can be expressed by
		\begin{equation} \label{eq: phi}
		\begin{gathered}
		\eta_1=\arccos\bigg(\dfrac{\cos \alpha_{4i}-\cos \alpha_{4i-3} \cos \xi_i}{\sin \alpha_{4i-3} \sin \xi_i}\bigg) \\
		\eta_2=\arccos\bigg(\dfrac{\cos \alpha_{4i+3}-\cos \alpha_{4i+2} \cos \xi_{i+1}}{\sin \alpha_{4i+2} \sin \xi_{i+1}}\bigg) \\
		\cos \phi_i=- \cos \eta_1 \cos \eta_2-\sin \eta_1\sin \eta_2 \cos (\alpha_{4i-3}+\alpha_{4i+2})
		\end{gathered}
		\end{equation}
	\end{enumerate}
	When the Hausdorff distance $\epsilon \rightarrow 0$, the surface $S$ can be expressed as:
	\begin{equation} \label{eq: surface}
	\vec{r}(u,t)=\Gamma(u)+\widetilde{f}_u(t)
	\end{equation}
	where $\widetilde{f}_u$ depends on $u$ (equation \eqref{eq: affine 2}) and locates on different planes determined by $\phi_i$ (equation \eqref{eq: phi}).
\end{prop}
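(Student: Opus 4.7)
The plan is to assemble the proof constructively, using Propositions \ref{prop co-planar}, \ref{prop: approximation} and \ref{prop datum curve} as the two ``orthogonal'' building blocks: the row construction, which realises an arbitrary datum curve $\Gamma$, and the column construction, which realises an arbitrary target curve in the plane of that column. First I would verify that these two constructions are compatible, i.e.\ the choices made in steps 1 and 2 determine, without any further freedom, the sector angles at all remaining inner vertices. Once the first row is fixed by Proposition \ref{prop datum curve} and the first column from the left is fixed by Proposition \ref{prop: approximation}, every interior vertex has three of its four sector angles already prescribed; the developability condition $\sum \alpha = 2\pi$ at that vertex together with the one-equation consistency coming from the ``parallel repeating'' structure (i.e.\ $\alpha_{4i+1}+\alpha_{4i+3}=\pi$, $\alpha_{4i+2}+\alpha_{4i+4}=\pi$ as used in Proposition \ref{prop datum curve}) then closes the system. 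I would also check that the two approximation errors add consistently in the Hausdorff metric, so that choosing $\epsilon_1,\epsilon_2$ sufficiently small with respect to $\epsilon$ is enough.

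Next I would derive the affine transformation in equation \eqref{eq: affine 2}. By Proposition \ref{prop co-planar} each column lies in its own plane and the ``sawtooth'' in that plane, expressed in the sheared coordinates $(\overline{x},\overline{y})$ of Proposition \ref{prop: approximation}, consists of line segments alternately parallel to the $\overline{x}$ and $\overline{y}$ axes. Between column $j$ and column $j+1$ the segment lengths rescale by the ratios $k_1=\sin\alpha_{4j-3}/\sin\alpha_{4j+2}$ and $k_2=\sin\alpha_{4j}/\sin\alpha_{4j+3}$, which follow from applying the spherical law of sines to the shared vertex at the top of the two columns (this is the same kind of identity as the one producing equation \eqref{eq: general beta and theta}). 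Composing these diagonal rescalings in $\overline{x}$-$\overline{y}$ coordinates of column $1$ and then pulling back through $A(\xi_{i+1},\theta)^{-1}$ into the $x$-$y$ frame of column $i+1$ yields exactly equation \eqref{eq: affine 2}. The recursion for $\xi_{i+1}$ is then the spherical law of cosines expressing the angle between two adjacent inner creases at an interior vertex in terms of the other three sector angles and the previous $\xi_i$, so I would just record it.

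After that I would compute the dihedral angle $\phi_i$ between the planes of columns $i$ and $i+1$. Viewing the shared top vertex as a spherical quadrilateral whose four sides are $\alpha_{4i-3},\alpha_{4i},\alpha_{4i+2},\alpha_{4i+3}$, the angles $\eta_1,\eta_2$ in equation \eqref{eq: phi} are the interior angles between the column plane and the direction of the row crease at that vertex, obtained directly from the spherical law of cosines (same formula as used throughout Proposition \ref{prop datum curve}). The angle between the two column planes is then the supplement of the spherical distance between the normals to those planes, giving the final formula for $\cos\phi_i$; I would write this out carefully since sign choices and the $\pm$ branches will need tracking, and this is where I expect the main bookkeeping obstacle to lie.

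Finally, I would pass to the limit $\epsilon\to 0$. In that limit the row sawtooth converges (in Hausdorff distance) to $\Gamma(u)$, each column sawtooth converges to a planar curve $\widetilde{f}_u(t)$ attached at the point $\Gamma(u)$ and lying in the plane determined by accumulating the rotations through the $\phi_i$, while equation \eqref{eq: affine 2} describes how the in-plane shape of that curve deforms with $u$. Adding the base point to the in-plane offset gives $\vec{r}(u,t)=\Gamma(u)+\widetilde{f}_u(t)$, which is equation \eqref{eq: surface}. The main non-routine points are (i) that the consistency of the sector-angle system really leaves no hidden obstruction to completing the creased paper, and (ii) the careful derivation of $\phi_i$ from the spherical quadrilateral at the shared vertex; the rest is straightforward limit-taking and concatenation of the two prior propositions.
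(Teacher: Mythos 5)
The paper gives no proof of this proposition at all: the ``steps'' in the statement \emph{are} the construction, the trigonometric formulas are simply asserted (with only the remark elsewhere that such identities ``can be directly derived from spherical trigonometry''), and the Discussion section concedes both that the algorithm can fail (singular parameters in equations \eqref{eq: the first beta and theta} and \eqref{eq: general beta and theta}, and crease patterns whose inner creases intersect away from vertices) and that the authors ``haven't found a good way to express the target surface $S$ analytically'' in the limit $\epsilon\to 0$. So your proposal is not competing with an existing argument; it is a plan for supplying one, and in outline it is the right plan --- concatenate Propositions \ref{prop datum curve} and \ref{prop: approximation}, derive \eqref{eq: affine 2} as a diagonal rescaling in the sheared column coordinates conjugated by $A$, get $\xi_{i+1}$ and $\phi_i$ from spherical trigonometry at the shared vertices, and pass to the limit.

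Three corrections and one caution. First, the sector angles are not fixed by a vertex-by-vertex closure condition as you describe: in the parallel repeating type each column's angles are propagated from its first vertex by repetition and supplementation, so step 1 already determines every sector angle in the paper; the residual freedom consumed by step 2 is the \emph{crease lengths} of the first column, which is legitimate precisely because lengths do not affect rigid-foldability. Second, the ratios $k_1=\sin\alpha_{4j-3}/\sin\alpha_{4j+2}$ and $k_2$ come from planar, not spherical, trigonometry: each panel is a rigid trapezoid whose two legs subtend the same perpendicular height between the parallel row creases, which forces $L_{j+1}/L_j=\sin\alpha_{4j-3}/\sin\alpha_{4j+2}$; the spherical law of sines at the shared vertex relates folding angles, not edge lengths. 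Third, your point (i) --- that there might be ``no hidden obstruction to completing the creased paper'' --- is not something you can verify in general, because the obstruction is real: the paper itself admits the planar crease pattern produced by these steps may self-intersect and that the angle equations may have no solution. A complete proof therefore needs an explicit hypothesis restricting $\Gamma$, $f_1$, $\rho_4$ and $\theta$ to a region where the construction yields an embedded crease pattern, and the limit statement \eqref{eq: surface} should be read as a description of the limiting object when the construction succeeds, not as an unconditional theorem.
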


\begin{cor} \label{cor first special}
	If $\Gamma$ is piecewise-spiral, on each piece $\Gamma_j$, $\alpha_{4i+1}$, $\alpha_{4i+2}$, $\alpha_{4i+3}$ and $\alpha_{4i+4}$ ($i \in [1,n-1]$) will keep the same from the second column respectively. Then each piece $S_j$ induced by $\Gamma_j$ can be expressed analytically as $f_1$ scanning along $\Gamma$. Some examples are demonstrated in \cite{song_design_2017} when $\Gamma$ is circular. Besides, if $f_1$ is piecewise-linear, $f_u$ remains piecewise-linear.
\end{cor}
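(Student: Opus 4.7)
The plan is to exploit the recursive structure established in Proposition \ref{prop datum curve} and Proposition \ref{prop: surface}. A (generalized) spiral---i.e.\ a curve of constant curvature and torsion (helix, circle, or line as degenerate cases)---when partitioned uniformly by arc length gives a discretization with constant segment lengths $l_i=l$, constant bending angles $\beta_i=\beta$, and constant dihedral angles $\theta_i=\theta$ along each piece $\Gamma_j$. These three numbers are the only data fed into the recurrence that determines the sector angles.

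First I would verify the fixed-point claim. Given the column-2 constraints $\alpha_{4i+1}+\alpha_{4i+3}=\pi$ and $\alpha_{4i+2}+\alpha_{4i+4}=\pi$ together with equation \eqref{eq: general beta and theta}, I substitute the ansatz $\alpha_{4i+1}=\alpha_{4i-3}$, $\alpha_{4i+2}=\alpha_{4i-2}$, $\alpha_{4i+3}=\alpha_{4i-1}$, $\alpha_{4i+4}=\alpha_{4i}$. Because $\beta_i=\beta_{i+1}=\beta$ and $\theta_i=\theta$ on $\Gamma_j$, both sides of each equation in \eqref{eq: general beta and theta} become identical, so the ansatz is a solution. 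The starting values $\alpha_1,\alpha_2$ produced by \eqref{eq: the first beta and theta} and the sign choices made for the first rigid folding motion fix one branch of the multi-valued recursion; continuity of the folding motion forces subsequent columns to remain on the same branch, so this repeating solution is \emph{the} solution, proving the first assertion.

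With the $\alpha$'s repeating, the scale factors in \eqref{eq: affine 2} satisfy $k_1=\sin\alpha_{4j-3}/\sin\alpha_{4j+2}=1$ and $k_2=\sin\alpha_{4j}/\sin\alpha_{4j+3}=1$; the recurrence for $\xi_{i+1}$ in \eqref{eq: phi} stabilises at a constant $\xi$; and $\phi_i$ becomes a constant dihedral angle $\phi$ between adjacent target-curve planes. Hence the affine map taking $f_i$ to $f_{i+1}$ reduces to a pure rigid motion, namely the rotation by $\phi$ about the inner crease that lies on $\Gamma$. Equation \eqref{eq: surface} then becomes $\vec{r}(u,t)=\Gamma(u)+R(u)\,f_1(t)$, i.e.\ $f_1$ rigidly scanning along $\Gamma_j$---exactly the swept-surface description stated in the corollary. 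For the final sentence, since rigid motions send line segments to line segments, the piecewise-linear property of $f_1$ is preserved on every column, so $\widetilde{f}_u$ remains piecewise-linear for every $u$.

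The main obstacle I anticipate is being rigorous about the \emph{uniqueness} of the fixed point of the recursion. Equation \eqref{eq: general beta and theta} is an equality of sums of \textrm{arccos} expressions in which the $\pm$ signs depend on the rigid folding branch and on the magnitudes of the sector angles; the repeating ansatz is consistent only if these sign choices are propagated consistently from the first column. I would treat this by continuity: along the rigid folding motion the configuration deforms continuously from the planar state, so the $\pm$ selections are locked in at the start and cannot flip without passing through a singular configuration, which does not occur on the chosen folding branch. Once this is established, the rest of the corollary follows from direct substitution into \eqref{eq: affine 2} and \eqref{eq: phi}.
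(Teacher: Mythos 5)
The paper states this corollary without proof, so there is no official argument to compare against; your overall route --- constant discrete invariants of the spiral, hence a repeating solution of the sector-angle recurrence, hence a constant column-to-column transformation, hence an analytically expressible sweep, with piecewise-linearity preserved because that transformation is affine --- is the natural one and is surely what the authors intend. Two of your concrete verifications, however, do not hold as written. First, substituting $\alpha_{4i+k}=\alpha_{4(i-1)+k}$ into \eqref{eq: general beta and theta} does \emph{not} make ``both sides identical'': the two sides interchange the roles of the first and second sector angles at a vertex (compare $\frac{\cos \alpha_{4i-3} \cos \beta_i-\cos \alpha_{4i-2}}{\sin \alpha_{4i-3} \sin \beta_i}$ on the left with $\frac{\cos \alpha_{4i+2} \cos \beta_{i+1}-\cos \alpha_{4i+1}}{\sin \alpha_{4i+2} \sin \beta_{i+1}}$ on the right). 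You must feed in the column-2 relations $\alpha_{4i+3}=\pi-\alpha_{4i+1}$, $\alpha_{4i+4}=\pi-\alpha_{4i+2}$: writing $(a,b)=(\alpha_{4i-3},\alpha_{4i-2})$, $X=\arccos\frac{\cos a\cos\beta-\cos b}{\sin a\sin\beta}$ and $Y=\arccos\frac{\cos b\cos\beta-\cos a}{\sin b\sin\beta}$, the first equation of \eqref{eq: general beta and theta} reduces to $X+Y=Y+X$ (and only for the matching sign choice), while the second reduces to $\theta=\pm(\pi-X)\pm(\pi-Y)$, which is a genuine constraint on $(a,b)$ rather than an identity. The repeating ansatz is therefore admissible only if the constant $\theta$ of the spiral coincides with the value this formula assigns to the pair $(a,b)$ delivered by the first (halting) column, and showing that the $i=1$ step of Proposition \ref{prop datum curve} delivers exactly such a pair is the real content of the fixed-point claim; your appeal to continuity of the folding branch addresses which root is selected, not whether the constant sequence is a root at all.

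Second, the reduction of \eqref{eq: affine 2} to a rigid motion is incorrect. Under the repeating ansatz $k_1=\sin\alpha_{4j-3}/\sin\alpha_{4j+2}=\sin a/\sin b$, which is not $1$ in general; the column-2 relations give $k_2=\sin(\pi-b)/\sin(\pi-a)=1/k_1$. Each column therefore applies the \emph{same} unimodular linear map $A^{-1}(\xi,\theta)\,\mathrm{diag}(k,1/k)\,A(\xi,\theta)$ (your observation that $\cos\xi_{i+1}=\cos\xi_i$ under the ansatz is correct, as is the constancy of $\phi_i$ from \eqref{eq: phi}), so the surface is swept by iterating a fixed area-preserving affine transformation of $f_1$ together with a fixed dihedral turn $\phi$. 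This is exactly what makes $S_j$ analytically expressible, but the cross-sections are congruent only when $k=1$, e.g.\ in the circular, surface-of-revolution case of \cite{song_design_2017}. The last sentence of the corollary survives unchanged, since affine maps, like rigid motions, carry line segments to line segments.
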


\subsection{Orthodiagonal Type}

The developable case of the orthodiagonal type of rigid-foldable quadrilateral creased papers is generated among several parallel line segments \cite{he_rigid_2020}, as shown in Figure \ref{fig: orthodiagonal creased paper}. For all $i,j$, The sector angles here should satisfy
\begin{equation} \label{eq: non-stitching}
\dfrac{\tan \alpha_{ij}}{\tan \alpha_{ij+1}}=\dfrac{\tan \alpha_{i+1j}}{\tan \alpha_{i+1j+1}}
\end{equation}
$i \ge 1$, $j \ge 0$. This equation guarantees each column and each row of inner vertices to be co-planar during the rigid folding motion. 

\begin{figure}[!tb]
	\noindent \begin{centering}
		\includegraphics[width=1\linewidth]{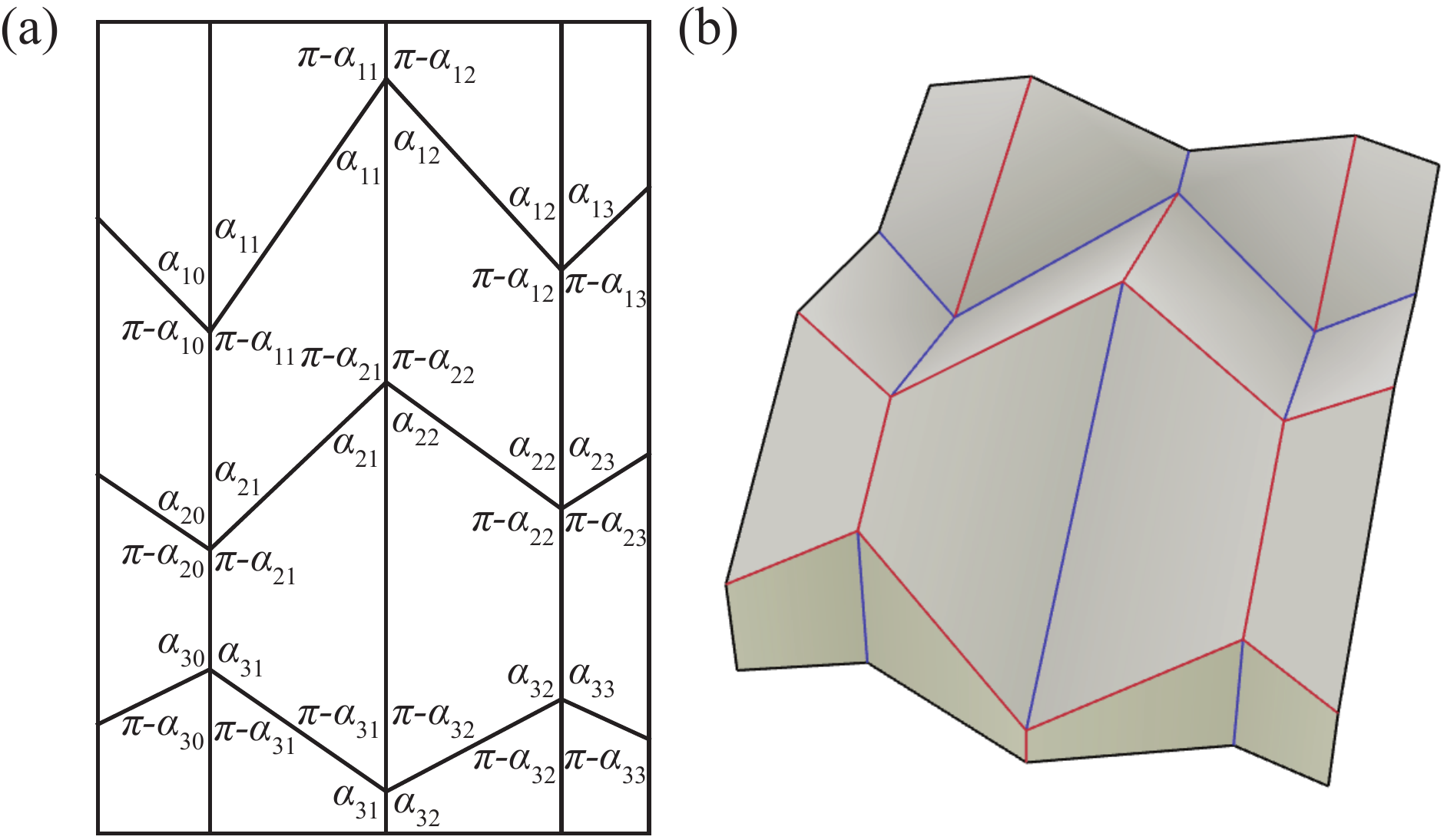}
		\par\end{centering}
	
	\caption{\label{fig: orthodiagonal creased paper}(a) is an example of the developable case of the orthodiagonal type, where we label out the relations among the sector angles. Additionally, the sector angles should satisfy equation \eqref{eq: non-stitching}. (b) is a rigidly folded state of (a), plotted by Freeform Origami \cite{tachi_freeform_2010-1}. The mountain and valley creases are colored red and blue.}
\end{figure}

\begin{prop} \label{prop: datum curve 2}
	The inner vertices on a column of the orthodiagonal type can approximate a Darboux-integrable curve $\Gamma: J \rightarrow \mathbb{R}^2$, named a \textit{datum curve}.
\end{prop}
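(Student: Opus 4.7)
The plan mirrors the three-dimensional construction of Proposition \ref{prop datum curve}, but is simpler because the co-planarity condition \eqref{eq: non-stitching} forces each column of inner vertices to remain in a single plane throughout the motion, so the torsion angles $\theta_i$ along the polyline approximating $\Gamma$ are automatically zero.

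First, using Darboux-integrability of $\Gamma$, I would partition $\Gamma$ into $n+2$ points $A_0,\dots,A_{n+1}$ such that the connecting polyline lies within Hausdorff distance $\epsilon$ of $\Gamma$; as in Proposition \ref{prop datum curve}, any prescribed $\epsilon>0$ can be achieved by taking $n$ large enough. Denote the segment lengths by $l_i=\|A_iA_{i+1}\|$ and the turning angles by $\beta_i=\angle A_{i-1}A_iA_{i+1}$.

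Next I would realize this polyline as the image of a column at a chosen rigidly folded state. At the topmost vertex I fix one folding angle as a parameter and solve relations analogous to \eqref{eq: the first beta and theta} for the first two sector angles, so that the first two inner edges meet at the prescribed angle $\beta_1$. Propagating down the column, I apply relations analogous to \eqref{eq: general beta and theta} with $\theta_i\equiv 0$ imposed by coplanarity, together with the developable sum $\sum_j\alpha_{ij}=2\pi$ and the local vertex structure of Figure \ref{fig: orthodiagonal creased paper}, to obtain sector angles $\alpha_{ij}$ that reproduce each successive $\beta_i$. The planar lengths of the inner creases are free parameters that I choose to match the $l_i$ exactly.

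The main obstacle, and the only step genuinely different from Proposition \ref{prop datum curve}, is extending the column angles to a globally consistent orthodiagonal creased paper. The inter-column coupling is exactly \eqref{eq: non-stitching}, which determines each $\alpha_{i,j+1}$ from three of its neighbours; once the angles of the target column are specified, \eqref{eq: non-stitching} propagates outward to the remaining columns. Admissibility, namely $\alpha_{ij}\in(0,\pi)$ and preservation of 1-DOF rigid-foldability, is secured by a continuity argument: refining the partition of $\Gamma$ keeps every $\alpha_{ij}$ within a controllable neighbourhood of its planar reference value, so the sector angles stay admissible for $n$ large. Finally, the rigid folding motion is arranged as in Proposition \ref{prop datum curve} so that panel clashing halts it at the designated state, and remarks analogous to Remarks \ref{rem: comment} and \ref{rem: comment 2} reduce the general case to a continuous $\Gamma$ on a closed interval.
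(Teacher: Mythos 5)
Your overall skeleton (partition $\Gamma$, compute $l_i$ and $\beta_i$, solve for sector angles, propagate via \eqref{eq: non-stitching}) matches the paper's, but two steps diverge in ways that matter. First, the paper does not solve spherical-trigonometry relations at a generic folded state with a free folding-angle parameter, as you propose by analogy with \eqref{eq: the first beta and theta} and \eqref{eq: general beta and theta}. Instead it exploits the halting condition directly: the motion is designed to halt by flat-folding at the left side of the first column, and at that state the turning angle of the polyline is determined by the left sector angle alone, giving the closed form $\alpha_{i0}=(\pi\pm\beta_i)/2$ in \eqref{eq: straight-line alpha 1}. This decouples the $\beta_i$-matching from the inter-column constraint \eqref{eq: non-stitching}, which is important because \eqref{eq: non-stitching} leaves only \emph{one} free sector angle $\alpha_{11}$ in the next column (constrained by the inequalities \eqref{eq: straight-line alpha 2}); your scheme implicitly assumes more per-vertex freedom than the orthodiagonal type actually has. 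You also defer the halting mechanism to ``as in Proposition \ref{prop datum curve}'', but there the halt is produced by a dedicated flat-foldable column of the parallel repeating type, which cannot simply be imported here; in the orthodiagonal construction the halt is built into the choice of $\alpha_{i0}$ itself.

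The genuine gap is your admissibility argument. You claim that refining the partition keeps every $\alpha_{ij}$ in a controllable neighbourhood of an admissible value, but the opposite happens: for a smooth $\Gamma$, refining a partition whose points lie \emph{on} the curve forces $\beta_i\rightarrow\pi$, so $\alpha_{i0}=(\pi\pm\beta_i)/2$ degenerates towards $0$ or $\pi$ and the creased paper collapses. The paper avoids exactly this by choosing the partition points on an $\epsilon$-tube of $\Gamma$ rather than on $\Gamma$ itself, so the approximating polyline keeps zigzagging with turning angles bounded away from $\pi$ while still converging in Hausdorff distance. Without that (or an equivalent) device, the limit $n\rightarrow\infty$ in your construction does not produce admissible sector angles, and the $\epsilon$-approximation claim fails.
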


\begin{proof}
An approximation can be generated following the steps mentioned below.
\begin{enumerate}
	\item Under a given Hausdorff distance $\epsilon$, partition $\Gamma$ (black curve in Figure \ref{fig: straight-line approximation}(a)) by $n+2$ points and connect adjacent partition points in sequence by line segments (orange line segments in Figure \ref{fig: straight-line approximation}(a)). To make the angle between adjacent inner creases not too close to $\pi$ we choose partition points on an $\epsilon$-tube of $\Gamma$ (purple curves in Figure \ref{fig: straight-line approximation}(a)). Note that here we use a method different from step 1 in Proposition \ref{prop datum curve}, and there are many techniques to approximate a Darboux-integrable curve by a series of line segments.
	\item Assign a direction of $\Gamma$, label each partition point $A_i$ ($i \in [0,n+1]$) along this direction. From the coordinates of $A_i$, calculate $l_i= \|A_iA_{i+1}\|$ ($i \in [0,n]$) and $\beta_i=\angle A_{i-1}A_iA_{i+1}$ ($i \in [1,n]$).
	\item Without loss of generality, we just consider the first column from the left. Calculate all the sector angles on the left $\alpha_{i0}$ ($i \in [1,n]$) from equation \eqref{eq: straight-line alpha 1}, which is to make the rigid folding motion halt at the left side of the first column.
	\begin{equation} \label{eq: straight-line alpha 1}
	\alpha_{i0}=\dfrac{\pi \pm \beta_i}{2}
	\end{equation}	
	Note that the $\pm$ depends on whether the line segments (coloured orange in Figure \ref{fig: straight-line approximation}(a)) ``turn left'' or ``turn right'' along the direction assigned for $\Gamma$. Only one of $\alpha_{i1}$ can be randomly chosen. Suppose it is $\alpha_{11}$, which should satisfy equation \eqref{eq: straight-line alpha 2} to make sure the inner creases turn left or right properly, and the rigid folding motion halts at the left side.
	\begin{equation} \label{eq: straight-line alpha 2}
	\begin{gathered}
	(\alpha_{11}-\pi/2)(\alpha_{10}-\pi/2)>0 \\
	0<|\alpha_{11}-\pi/2|<|\alpha_{10}-\pi/2|
	\end{gathered}
	\end{equation}
	the other sector angles $\alpha_{i1}$ ($i \in [2,n]$) can be calculated from equation \eqref{eq: non-stitching}.
	\item With $l_i$ and $\alpha_{i0}$, $\alpha_{i1}$ ($i \in [1,n]$), draw the creased paper.
\end{enumerate}	
\end{proof}

\begin{rem}
	As shown in Remark \ref{rem: comment 2}, generically, we can require $\Gamma$ to be continuous and $J$ to be a closed interval in $\mathbb{R}$. Besides, $\Gamma$ can be a closed curve. Another point is the approximation of $\Gamma$ does not need to turn left or right alternately, which means the mountain-valley assignment in each row not necessarily change alternately.
\end{rem}

Then we will analyze which surface the orthodiagonal type can approximate. From equation \eqref{eq: non-stitching}, we know the sector angles in the first column and first row are independent, therefore we can also approximate a Darboux-integrable curve with the inner vertices in a row. The problem of such approximation is, we cannot write a clear expression for the curve approximated by other rows of inner vertices in this creased paper, which makes it hard to grasp the feature of the target surface. (In step 3 of Proposition \ref{prop: surface}, we express the curve approximated by other columns of inner vertices as an affine transformation of the target curve.) Based on that we set $\alpha_{1j+1}=\alpha_{1j}$ ($j \ge 1$), and the result in Proposition \ref{prop: approximation} can be applied. For the orthodiagonal type, this simplification makes it possible to express the curve approximated by other rows of inner vertices as an affine transformation of the curve approximated by the first row of inner vertices. If we regard the surface approximated by such a simplified creased paper as a piece, the target surface can consist of these pieces stitched in the transverse direction, as shown in Proposition \ref{prop: approximation 2}.

\begin{figure}[!tb]
	\noindent \begin{centering}
		\includegraphics[width=1\linewidth]{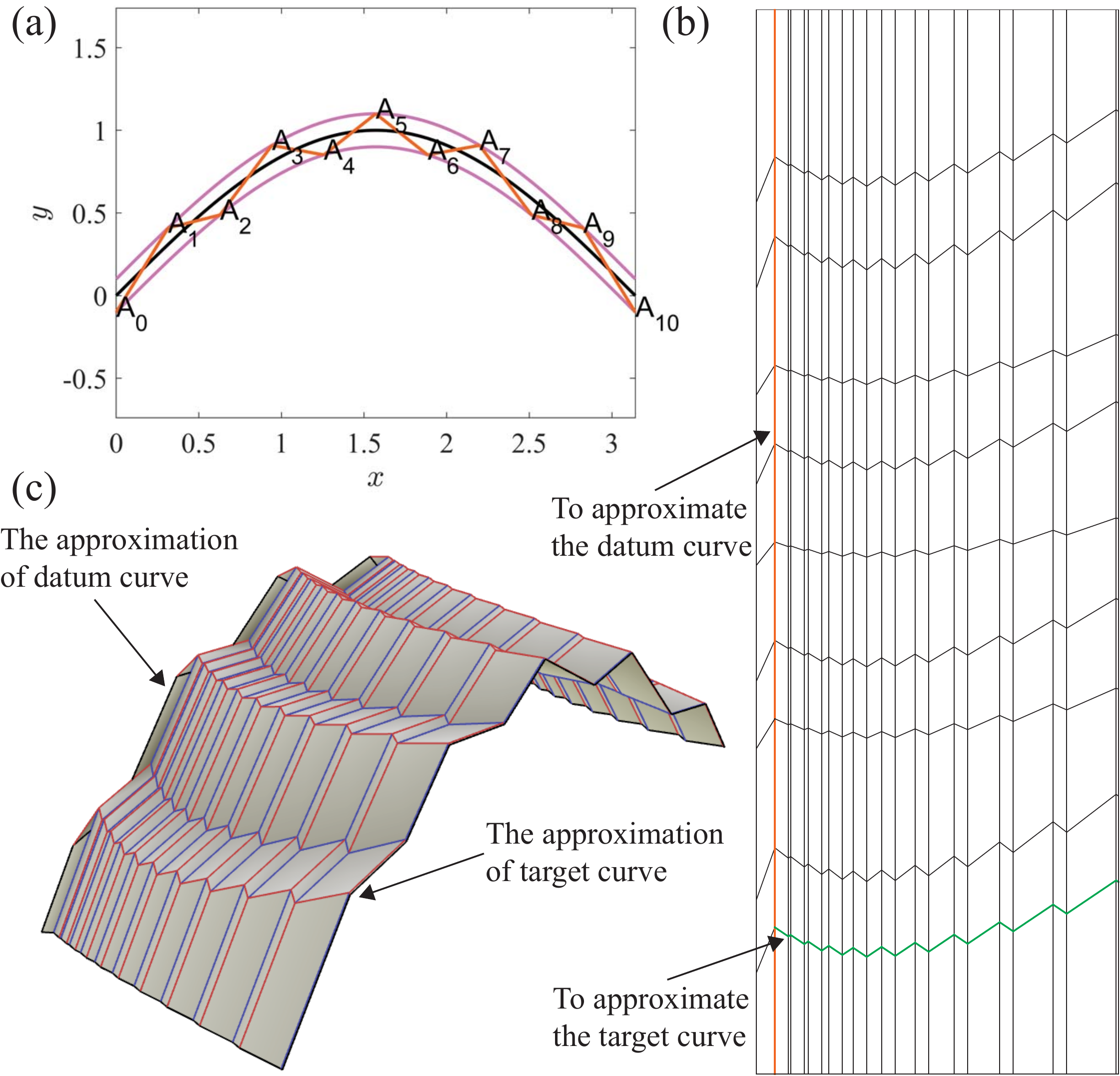}
		\par\end{centering}
	
	\caption{\label{fig: straight-line approximation}(a) An example of the datum curve $\Gamma$ (coloured black) with its approximation by line segments (coloured orange) generated from a $\epsilon$-tube of $\Gamma$ (coloured purple). In this example, $n=9$, $\Gamma(u)=[u, \sin u]^T$, $u \in [0,\pi]$, $\alpha_1=\pi/4+\alpha_2/2$. Here we partition $\Gamma$ uniformly in $u$. (b) is the creased paper approximating a piece described in Proposition \ref{prop: approximation 2}, and $f_1(t)=[t,t-\ln(t)]^T$, $t \in [0.5,1.5]$. We choose $\theta=30^\circ$. The orange and green inner creases are to approximate the datum curve $\Gamma$ and the target curve $f_1$. (c) is the final rigidly folded state of (b), where we approximate these two curves, plotted by Freeform Origami \cite{tachi_freeform_2010-1}. The rigid folding motion halts due to a clash at the column we approximate the datum curve. The mountain and valley creases are colored red and blue.}
\end{figure}

\begin{prop} \label{prop: approximation 2}
	The orthodiagonal type can approximate a surface $S$ which is stitched by countable pieces $S_k: V \ni (u,t) \rightarrow \vec{r}(u,t) \in \mathbb{R}^3$ in the transverse direction under a given Hausdorff distance $\epsilon$. Each piece is generated by the following steps. (see Figure \ref{fig: straight-line approximation})
	\begin{enumerate}
		\item Choose a datum curve $\Gamma: u \in J \rightarrow \mathbb{R}^2$ and approximate it in a column under a sufficiently small Hausdorff distance $\epsilon_1<\epsilon$, as described in Proposition \ref{prop: datum curve 2}.
		\item Choose a target curve $f_1: t \in I \rightarrow \mathbb{R}^2$ and approximate it in the first row from the top under a sufficiently small Hausdorff distance $\epsilon_2<\epsilon$, as described in Proposition \ref{prop: approximation}.
		\item Here $\alpha_{1j+1}=\alpha_{1j}$ ($j \ge 1$). With equation \eqref{eq: non-stitching}, calculate the other sector angles in the creased paper. In other columns, the shape of the target curve $f_i : I \ni t \rightarrow (x_i(t),y_i(t))$ ($i \in [2,n]$) is an affine transformation of $f_1 : I \ni t \rightarrow (x_1(t),y_1(t))$.
		\begin{equation} \label{eq: straight-line target curves}
		\left[ \begin{array}{c}
		x_i(t)\\
		y_i(t)
		\end{array} \right]= \frac{\sin \alpha_{i1}}{\sin \alpha_{11}} A^{-1}(\xi_i,\theta) A(\xi_1,\theta)
		\left[ \begin{array}{c}
		x_1(t)\\
		y_1(t)
		\end{array} \right]
		\end{equation}
		where,
		\begin{equation*}
		A(\xi,\theta)=
		\left[ \begin{array}{cc}
		1 & -1 / \tan \xi \\
		0 & 1 / \sin \xi
		\end{array} \right]
		\left[ \begin{array}{cc}
		\cos \theta & \sin \theta \\
		-\sin \theta & \cos \theta
		\end{array} \right]
		\end{equation*}
		is the affine matrix we used in Proposition \ref{prop: approximation}, and for $i \in [1,n]$
		\begin{equation*}
		\cos \xi_{i}=\dfrac{4 \cos^2\alpha_{i1}}{1-\cos\beta_i}-1
		\end{equation*}
		$\theta$ is the rotation angle in the approximation of $f_1$. $f_i$ locates on a plane perpendicular to $\Gamma$.
	\end{enumerate}
	When the Hausdorff distance $\epsilon \rightarrow 0$, the piece $S_k$ can be expressed as:
	\begin{equation} 
	\vec{r}(u,t)=\Gamma(u)+\widetilde{f}_u(t)
	\end{equation}
	where $\widetilde{f}_u$ depends on $u$ (equation \eqref{eq: straight-line target curves}) and locates on the tangent plane of $\Gamma$ at $u$ that is also perpendicular to $\Gamma$. 
\end{prop}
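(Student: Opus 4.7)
The plan is to mirror the structure of Proposition \ref{prop: surface} but substitute the orthodiagonal coupling equation \eqref{eq: non-stitching} for the parallel-repeating constraints, and exploit the extra symmetry imposed by the simplification $\alpha_{1j+1}=\alpha_{1j}$. Since columns and rows of inner vertices are each co-planar throughout the rigid folding motion (by the remark following equation \eqref{eq: non-stitching}), the creased paper factors cleanly into a ``spine'' in the plane of $\Gamma$ and a family of ``ribs'' in planes transverse to $\Gamma$, and I would show these ribs are affine copies of $f_1$.

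First I would apply Proposition \ref{prop: datum curve 2} directly to handle step 1: the first column under Hausdorff distance $\epsilon_1<\epsilon/2$ fixes $\alpha_{i0}$, $\alpha_{i1}$, and the segment lengths $l_i$. For step 2, I would apply Proposition \ref{prop: approximation} to the first row with rotation $\theta$ and shear $\xi_1$; the condition $\xi_1 = |2\alpha_2-\pi|$-analogue here becomes $\cos\xi_1 = 4\cos^2\alpha_{11}/(1-\cos\beta_1)-1$, which I would verify by writing the spherical law of cosines at vertex $(1,1)$ together with $\alpha_{10}=(\pi\pm\beta_1)/2$ from equation \eqref{eq: straight-line alpha 1}. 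This both locates the first rib and identifies $\xi_1$ in terms of $\alpha_{11}$ and $\beta_1$.

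The central work is step 3, establishing equation \eqref{eq: straight-line target curves}. The simplification $\alpha_{1j+1}=\alpha_{1j}$ forces equation \eqref{eq: non-stitching} to propagate uniformly along each row: $\alpha_{ij+1}=\alpha_{ij}$ for every $j\ge 1$, so each rib is geometrically similar to the first rib of that row up to the global ratio $\sin\alpha_{i1}/\sin\alpha_{11}$ inherited from the law of sines applied to the spherical vertex figure at $(i,0)$. Composing this scaling with the shear/rotation pair $A^{-1}(\xi_i,\theta)A(\xi_1,\theta)$ (where the $\xi_i$ formula is derived by the same spherical law of cosines as for $\xi_1$, now at vertex $(i,1)$) yields the stated affine map. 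I would then argue perpendicularity of the $f_i$-plane to $\Gamma$ from the orthodiagonal defining property: at each spine vertex, the two transverse inner creases span a plane whose normal lies in the plane of $\Gamma$ and is tangent to the piecewise-linear spine, so in the limit this plane is the normal plane to $\Gamma$ at $u$.

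The main obstacle is confirming the $\cos\xi_i$ and scaling formulas without sign ambiguity, since equation \eqref{eq: straight-line alpha 2} only pins down local orientation at the first vertex and the signs have to propagate consistently through the whole column via equation \eqref{eq: non-stitching}; this requires a careful case split on whether successive ribs are obtained by ``turning left'' or ``turning right'' along $\Gamma$. Once this is settled, taking $\epsilon_1,\epsilon_2\to 0$ gives $\widetilde f_u$ as the pointwise limit of the affine images, and the decomposition $\vec r(u,t)=\Gamma(u)+\widetilde f_u(t)$ follows immediately. Finally, to obtain an arbitrary $S$ stitched from such pieces $S_k$, I would observe that distinct pieces share only a boundary rib, which can be matched by choosing compatible $(\Gamma_k,f_{1,k})$ at the interface; the transverse stitching does not introduce new rigid-foldability constraints because each piece's rigid folding motion halts independently at its own datum-curve column.
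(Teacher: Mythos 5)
Your proposal follows essentially the same constructive route as the paper, which presents Proposition \ref{prop: approximation 2} without a separate proof: everything rests on Propositions \ref{prop: datum curve 2} and \ref{prop: approximation}, the uniform propagation of $\alpha_{1j+1}=\alpha_{1j}$ through equation \eqref{eq: non-stitching}, and the spherical-trigonometric derivation of the $\xi_i$ and scaling factors, exactly as you outline. The only wording worth tightening is the claim that stitched pieces halt ``independently'' --- the stitched creased paper is a single connected 1-DOF mechanism, so the halt is a global event triggered at the designated clashing column --- but this does not change the substance of the construction.
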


\begin{cor} \label{cor second special}
	If $\Gamma$ is piecewise-circular, on each piece $\Gamma_j$, $\alpha_{i0}$ and $\alpha_{i1}$ ($i \in [1,n]$) will keep the same respectively. Then each piece $S_j$ induced by $\Gamma_j$ can be expressed analytically as $f_1$ scanning along $\Gamma_j$. Besides, if $f_1$ is piecewise-linear, $f_u$ remains piecewise-linear, then each piece induced by $f_1$ will become a cylinderical developable surface.    
\end{cor}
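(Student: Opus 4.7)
The plan is to substitute the piecewise-circular hypothesis into the construction of Proposition~\ref{prop: approximation 2} and chase the resulting simplifications. On each circular piece $\Gamma_j$, I choose in Step~1 of Proposition~\ref{prop: datum curve 2} a partition with equal arc length. For such a partition of a circular arc the chords all have equal length and the consecutive angles $\beta_i=\angle A_{i-1}A_iA_{i+1}$ are equal, being the vertex angles of a regular polygon inscribed in the circle; thus $\beta_i\equiv\beta$ throughout $\Gamma_j$. The chords turn consistently in one direction, so the sign in equation~\eqref{eq: straight-line alpha 1} is fixed and $\alpha_{i0}=(\pi\pm\beta)/2$ is independent of $i$. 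Equation~\eqref{eq: non-stitching} with $j=0$ then forces $\tan\alpha_{i1}=\tan\alpha_{i+1,1}$ (since the numerators $\tan\alpha_{i0}$ already agree), and by induction $\alpha_{i1}\equiv\alpha_{11}$.

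With $\beta_i,\alpha_{i0},\alpha_{i1}$ each constant in $i$, I substitute into the affine transformation~\eqref{eq: straight-line target curves}. The scalar $\sin\alpha_{i1}/\sin\alpha_{11}$ collapses to $1$; the formula $\cos\xi_i=4\cos^2\alpha_{i1}/(1-\cos\beta_i)-1$ gives $\xi_i\equiv\xi_1$; hence $A^{-1}(\xi_i,\theta)A(\xi_1,\theta)=I$, and $f_i\equiv f_1$. In the limit $\epsilon\to 0$ the piece $S_j$ is therefore the surface swept by the single profile $f_1$ along $\Gamma_j$ while $f_1$ is kept in the normal plane of $\Gamma_j$, which is precisely the analytic statement ``$f_1$ scanning along $\Gamma_j$''.

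For the last clause, if $f_1$ is piecewise-linear then $f_u\equiv f_1$ is piecewise-linear as well. A single linear segment of $f_1$ lies in the normal plane of $\Gamma_j$, and every normal plane of a planar circular arc passes through the axis perpendicular to the circle at its centre; sweeping the segment along $\Gamma_j$ therefore rotates it about this common axis, producing a portion of a surface of revolution with a straight generatrix --- namely a cone, cylinder, or planar annulus --- each a developable ruled surface. The main technical step is the collapse of $A^{-1}(\xi_i,\theta)A(\xi_1,\theta)$ to the identity, which hinges on $\xi_i$ being constant (and in turn on both $\alpha_{i1}$ and $\beta_i$ being constant); the only place where the planarity of a circular arc is genuinely needed is in identifying the common rotation axis of the normal planes that makes the swept surface developable.
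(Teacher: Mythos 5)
Your derivation of the first two claims is correct and is essentially the argument the paper leaves implicit (the corollary is stated without proof): an equal-arc-length partition of a circular piece gives constant $\beta_i$ and a fixed turning direction, so equation \eqref{eq: straight-line alpha 1} gives a constant $\alpha_{i0}$; equation \eqref{eq: non-stitching} with $j=0$ then propagates $\alpha_{11}$ unchanged (using that $\tan$ is injective on $(0,\pi)\setminus\{\pi/2\}$); and constancy of $\alpha_{i1}$ and $\beta_i$ collapses the scale factor to $1$ and forces $\xi_i\equiv\xi_1$ in equation \eqref{eq: straight-line target curves}, whence $f_u\equiv f_1$ and $S_j$ is $f_1$ scanned along $\Gamma_j$. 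One caveat: Step 1 of Proposition \ref{prop: datum curve 2} deliberately places the partition points on an $\epsilon$-tube, not on $\Gamma$, to keep $\beta_i$ away from $\pi$. With your points on the circle itself, $\beta_i\to\pi$ and $\alpha_{i0}\to 0$ or $\pi$ as the partition is refined, so the creased paper degenerates as $\epsilon\to 0$; with the alternating $\epsilon$-tube construction the angles are only $2$-periodic rather than constant. The limit surface is unaffected, but you should acknowledge that you are departing from the cited construction and that the strict constancy claim depends on that departure.

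The genuine weak point is the last clause. You conclude that each linear piece of $f_1$ sweeps out a cone, cylinder, or planar annulus, i.e.\ a developable surface of revolution; the corollary asserts a \emph{cylindrical} developable surface, and a cone is not cylindrical, so your conclusion and the stated one coincide only when the segment is parallel to the rotation axis. Moreover, your argument silently places $f_u$ in the normal plane of $\Gamma_j$, whereas Proposition \ref{prop: approximation 2} says $\widetilde{f}_u$ lies on ``the tangent plane of $\Gamma$ at $u$ that is also perpendicular to $\Gamma$'' --- on the most natural reading, the plane spanned by the tangent of $\Gamma$ and the normal to $\Gamma$'s plane. Under that reading a generic segment swept around a circle traces a hyperboloid of one sheet, which is ruled but \emph{not} developable, and the clause would fail except for segments with no tangential component. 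Your normal-plane reading is the charitable one that makes developability come out true, but you must state it as an assumption and reconcile it with the proposition's wording; as written, the step from ``surface of revolution with straight generatrix'' to the paper's ``cylindrical developable surface'' is not established.
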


\section{Discussion}

\subsection{Comment on the Algorithms}

This article presents some initial results for the problem we have set. We recognize there are flaws and limitations in the algorithms, some of which are discussed here.

\begin{enumerate}
	\item In Proposition \ref{prop: approximation}, for a given target curve $f$ and an angle $\xi$, the condition we give to find an appropriate $\theta$ is not convenient. For the examples in this article we try scattered $\theta \in [0,2\pi)$ to find possible approximations.
	\item In Proposition \ref{prop datum curve}, the folding angle $\rho_4$ is a flexible parameter, that can be used to adjust the sector angles $\alpha_1$ and $\alpha_2$. Actually, $\rho_4$ is the magnitude of all the folding angles on every row of inner creases. If we set $\rho_4$ too close to $\pi$, $\alpha_1$ will increase, but the width of approximation will be too small in the longitudinal direction, which may not be suitable for application. If we set $\rho_4$ too far from $\pi$, the singularity of solutions will increase. For the approximation shown in Figures \ref{fig: approximation}(c) and \ref{fig: approximation}(d) we choose $\rho_4=5\pi/6$.
	\item The algorithm in Proposition \ref{prop datum curve} does not guarantee a solution. We need to assume the rigid folding motion and choose the signs that make equation \eqref{eq: general beta and theta} most likely to be solved. If it happens that the parameters are on the singular points of equations \eqref{eq: the first beta and theta} and \eqref{eq: general beta and theta}, there may be no solution.
	\item Even if we have obtained all the sector angles, when we plot the creased paper as described in Propositions \ref{prop: surface} and \ref{prop: approximation 2}, the inner creases in different columns may intersect at points other than vertices. We haven't found a good way to control this. Sometimes scaling the datum or target curve will help.
	\item In Proposition \ref{prop: datum curve 2}, the sector angle $\alpha_1$ is also a flexible parameter, which can be used to control the width of the approximation. There is no singularity in the algorithm proposed here. 
\end{enumerate}

\subsection{Comment on the Approximation}

Apart from the algorithms, we want to mention a few points related to an approximation for a surface.

\begin{enumerate}
	\item In Propositions \ref{prop: surface} and \ref{prop: approximation 2}, when the Hausdorff distance $\epsilon \rightarrow 0$, we haven't found a good way to express the target surface $S$ analytically, even though it is determined by the datum curve $\Gamma$, target curve $f_1$, and a folding angle $\rho_4$ (Proposition \ref{prop: surface}) or a sector angle $\alpha_1$ (Proposition \ref{prop: approximation 2}). We only give analytical expressions for special cases mentioned in Corollaries \ref{cor first special} and \ref{cor second special}. However, an analytical solution for how to approximate a curve is given in \cite{tachi_composite_2013}, which might be helpful in solving this problem.
	\item It is not necessary to make the rigid folding motion halt at the first column from the left. Such halting columns can be inserted to the creased paper arbitrarily. Besides, the two examples shown in sections 2.1 and 2.2 design the halting column differently, and there are many other possible techniques.
	\item The utilization of materials of our design depends on the proportion of halting columns with respected to the whole creased paper, which is at a relatively high level.
	\item If there are some holes on the target surfaces in Propositions \ref{prop: surface} and \ref{prop: approximation 2}, we can follow the algorithms described in this article and generate the approximation by applying kirigami on the creased papers.
	\item In this article we show that a series of developable surfaces can approximate a non-developable surface, which means the collection of all developable surfaces is not a closed set.
	\item Another possible design example is a creased paper with no inner panel (figure 4 in \cite{he_rigid_2020}), which can approximate more surfaces but will have some big ``cracks'' almost running through the final rigidly folded state. We will include the discussion on it in a future article.  
\end{enumerate}

\subsection{Optimization}

Optimizations can be applied to the algorithms mentioned in this article, which will lead to future work.

\begin{enumerate}
	\item In Propositions \ref{prop: approximation}, \ref{prop datum curve} and  \ref{prop: datum curve 2}, the piecewise-linear approximation do not need to be uniformly-spaced. Furthermore, the partition points are not necessarily on the curve. Given the number of partition points, there are some classic methods to approximate the datum and target curves, which will improve the accuracy of approximation.
	\item There are some other criteria for an approximation in Propositions \ref{prop: approximation}, \ref{prop datum curve} and \ref{prop: datum curve 2} under a given Hausdorff distance $\epsilon$, such as minimizing the number of inner vertices, the total bending energy \cite{solomon_flexible_2012}; or restricting the minimum length of all the creases, the minimum of sector angles, etc, which will involve more complex calculations.
\end{enumerate}

\section{Conclusion}

We have shown that it is possible to approximate some types of non-developable surfaces with a 1-DOF rigid folding motion starting from a planar creased paper. This might have useful engineering applications, for instance as a way of forming a shell structure in 3-dimensional space. However, given an arbitrary surface, the methods that can be used to generate an approximation with a planar creased paper that has limited DOFs are not fully understood.					

\section{Acknowledgment}

We thank Tom Aldridge for some preliminary works on this topic, and Hanxiao Cui for helpful discussions on Proposition \ref{prop: approximation}. This paper has been awarded the 7OSME Gabriella \& Paul Rosenbaum Foundation Travel Award.

\bibliographystyle{osmebibstyle}
\bibliography{Rigid-Folding}

\begin{thebibliography}{99}
\newcommand{\enquote}[1]{``#1''}
\newcommand{\doi}[1]{doi:#1}

\bibitem[Callens and Zadpoor~17]{callens_flat_2017}
Sebastien~JP Callens and Amir~A. Zadpoor.
\newblock \enquote{From flat sheets to curved geometries: {Origami} and
  kirigami approaches.}
\newblock \emph{Materials Today}.
\newblock \doi{10.1016/j.mattod.2017.10.004}.

\bibitem[Demaine and Tachi~17]{demaine_origamizer:_2017}
Erik~D. Demaine and Tomohiro Tachi.
\newblock \enquote{Origamizer: {A} practical algorithm for folding any
  polyhedron.}
\newblock In \emph{{LIPIcs}-{Leibniz} {International} {Proceedings} in
  {Informatics}},  77,  77. Schloss Dagstuhl-Leibniz-Zentrum fuer Informatik,
  2017.

\bibitem[Dudte et~al.~16]{dudte_programming_2016}
Levi~H. Dudte, Etienne Vouga, Tomohiro Tachi, and L.~Mahadevan.
\newblock \enquote{Programming curvature using origami tessellations.}
\newblock \emph{Nature materials} 15:5 (2016), 583--588.

\bibitem[Fuchs and Tabachnikov~99]{fuchs_more_1999}
Dmitry Fuchs and Serge Tabachnikov.
\newblock \enquote{More on paperfolding.}
\newblock \emph{The American Mathematical Monthly} 106:1 (1999), 27--35.

\bibitem[He and Guest~18]{he_new_2018}
Zeyuan He and Simon~D. Guest.
\newblock \enquote{New {Rigid}-foldable {Developable} {Quadrilateral} {Creased}
  {Papers}.}
\newblock In \emph{Proceedings of {IASS} {Annual} {Symposia}},  2018,  2018,
  pp.~1--8. International Association for Shell and Spatial Structures (IASS),
  2018.

\bibitem[He and Guest~19]{he_rigid_2019}
Zeyuan He and Simon~D. Guest.
\newblock \enquote{On rigid origami {I}: piecewise-planar paper with
  straight-line creases.}
\newblock \emph{Proceedings of the Royal Society A: Mathematical, Physical and
  Engineering Sciences} 475:2232 (2019), 20190215.
\newblock \doi{10.1098/rspa.2019.0215}.

\bibitem[He and Guest~20]{he_rigid_2020}
Zeyuan He and Simon~D. Guest.
\newblock \enquote{On rigid origami {II}: quadrilateral creased papers.}
\newblock \emph{Proceedings of the Royal Society A: Mathematical, Physical and
  Engineering Sciences} 476:2237 (2020), 20200020.
\newblock \doi{10.1098/rspa.2020.0020}.

\bibitem[Solomon et~al.~12]{solomon_flexible_2012}
Justin Solomon, Etienne Vouga, Max Wardetzky, and Eitan Grinspun.
\newblock \enquote{Flexible developable surfaces.}
\newblock In \emph{Computer {Graphics} {Forum}},  31,  31, pp.~1567--1576.
  Wiley Online Library, 2012.

\bibitem[Song et~al.~17]{song_design_2017}
Keyao Song, Xiang Zhou, Shixi Zang, Hai Wang, and Zhong You.
\newblock \enquote{Design of rigid-foldable doubly curved origami tessellations
  based on trapezoidal crease patterns.}
\newblock In \emph{Proc. {R}. {Soc}. {A}},  473,  473, p.~20170016. The Royal
  Society, 2017.

\bibitem[Tachi~10]{tachi_freeform_2010-1}
Tomohiro Tachi.
\newblock \enquote{Freeform rigid-foldable structure using bidirectionally
  flat-foldable planar quadrilateral mesh.}
\newblock \emph{Advances in architectural geometry 2010}, pp.~87--102.
\newblock Doi: 10.1007/978-3-7091-0309-8\_6.

\bibitem[Tachi~13]{tachi_composite_2013}
Tomohiro Tachi.
\newblock \enquote{Composite rigid-foldable curved origami structure.}
\newblock In \emph{1st {International} {Conference} on {Transformable}
  {Architecture} ({Transformables} 2013), {Seville}, {Spain}, {Sept}},
  pp.~18--20, 2013.

\end{thebibliography}

\theaffiliations

\end{document}